\DeclareMathAlphabet\EuRoman{U}{eur}{m}{n}
\SetMathAlphabet\EuRoman{bold}{U}{eur}{b}{n}
\crefname{assumption}{Assumption}{Assumptions}
\crefname{claim}{Claim}{Claims}
\let\reftagform@=\tagform@
\def\tagform@#1{\maketag@@@{\ignorespaces\textcolor{gray}{(#1)}\unskip\@@italiccorr}}
\renewcommand{\eqref}[1]{\textup{\reftagform@{\ref{#1}}}}
\definecolor{WowColor}{rgb}{.75,0,.75}
\definecolor{SubtleColor}{rgb}{0,0,.50}
\newcounter{margincounter}
\declaretheorem[style=plain,numberwithin=section,name=Theorem]{theorem}
\declaretheorem[style=plain,sibling=theorem,name=Lemma]{lemma}
\declaretheorem[style=definition,sibling=theorem,name=Definition]{definition}
\declaretheorem[style=definition,name=Assumption]{assumption}
\declaretheorem[style=remark,sibling=theorem,name=Remark]{remark}
\declaretheoremstyle[
    spaceabove=-6pt,
    spacebelow=6pt,
    headfont=\normalfont\bfseries,
    bodyfont = \normalfont,
    postheadspace=1em,
    qed=$\square$,
    headpunct={{}}]{myproofstyle}
\numberwithin{equation}{section}
\numberwithin{theorem}{section}
\def\[#1\]{\begin{align}#1\end{align}}
\def\*[#1\]{\begin{align*}#1\end{align*}}
\newcommand{\Reals}{\mathbb{R}}
\newcommand{\Nats}{\mathbb{N}}
\newcommand{\PosReals}{\Reals_{> 0}}
\newcommand{\dee}{\mathrm{d}}
\DeclareMathOperator*{\newlim}{\mathrm{lim}\vphantom{\mathrm{infsup}}}
\DeclareMathOperator*{\newmin}{\mathrm{min}\vphantom{\mathrm{infsup}}}
\DeclareMathOperator*{\newmax}{\mathrm{max}\vphantom{\mathrm{infsup}}}
\DeclareMathOperator*{\newinf}{\mathrm{inf}\vphantom{\mathrm{infsup}}}
\DeclareMathOperator*{\newsup}{\mathrm{sup}\vphantom{\mathrm{infsup}}}
\renewcommand{\lim}{\newlim}
\renewcommand{\min}{\newmin}
\renewcommand{\max}{\newmax}
\renewcommand{\inf}{\newinf}
\renewcommand{\sup}{\newsup}
\newcommand{\tvd}[2]{\|#1-#2\|_{\mathrm{TV}}}
\newcommand{\BorelSets}[1]{\mathcal{B}[#1]}
\newtheorem{open problem}{Open Problem}
\newcommand{\interior}[1]{%
  {\kern0pt#1}^{\mathrm{o}}%
}
\newcommand{\refproof}[1]{See \cref{#1} for \IfSubStr{#1}{,}{proofs}{a proof}. }
\newif\iflongform
\providecommand*{\toclevel@definition}{0}
\providecommand*{\toclevel@theorem}{0}
\providecommand*{\toclevel@lemma}{0}
\title[Drift, Minorization and Hitting Times]
{
Drift, Minorization, and Hitting Times
}
\newcommand{\expect}{\mathbb{E}}
\newcommand{\cX}{\mathcal{X}}
\begin{document}

\author[R.~M.~Anderson]{Robert M.~Anderson}
\address{University of California, Berkeley, Department of Economics}

\author[H.~Duanmu]{Haosui Duanmu}
\address{University of California, Berkeley, Department of Economics}

\author[A.~Smith]{Aaron Smith}
\address{University of Ottawa, Department of Mathematics and Statistics}

\author[J.~Yang]{Jun Yang}
\address{University of Toronto, Department of Statistical Sciences}

\maketitle

\begin{abstract}
The ``drift-and-minorization'' method, introduced and popularized in \cite{rosenthal1995minorization, meyn1994computable, meyn2012markov}, remains the most popular approach for bounding the convergence rates of Markov chains used in statistical computation. This approach requires estimates of two quantities: the rate at which a \textit{single} copy of the Markov chain ``drifts'' towards a fixed ``small set'', and a ``minorization condition'' which gives the worst-case time for \textit{two} Markov chains started within the small set to couple with moderately large probability. In this paper, we build on \cite{oliveira2012mixing,finitemixhit} and our work \cite{anderson2018mixhit,anderson2019mixhit2} to replace the ``minorization condition'' with an alternative ``hitting condition'' that is stated in terms of only \textit{one} Markov chain, and illustrate how this can be used to obtain similar bounds that can be easier to use.

\end{abstract}

\section{Introduction}

Since the seminal article by \citet{gelfand1990sampling}, Markov chain Monte Carlo methods (MCMC) have become ubiquitous in statistical computing. These methods suffer from a well-known problem: although it is easy to construct many MCMC algorithms that are guaranteed to converge \textit{eventually}, it is typically very difficult to tell \textit{how long} convergence will take for any given MCMC algorithm (see \textit{e.g.}  \cite{jones2001honest,diaconis2008gibbs}). Even worse, it is usually difficult to tell if an MCMC has converged yet, even after running it (see \textit{e.g.} \cite{gelman1992inference} for an important early paper in the ``diagnostics" literature, \cite{bhatnagar2011computational} for a proof that the problem is intractable in general, and \cite{hsu2015mixing,huber2016perfect} for cases where the problem becomes tractable). 

In the decades since the publication of \citet{gelfand1990sampling}, many techniques have been developed to  try to solve this problem for classes of MCMC algorithms that are important in statistics (see \textit{e.g.}\ the surveys by \citet{jones2001honest,diaconis2009markov}). The most popular of these techniques is the ``drift-and-minorization'' method, introduced and popularized in \citep{rosenthal1995minorization, meyn1994computable, meyn2012markov}. The purpose of this note is to introduce a similar bound in which the minorization condition (see Inequality \cref{EqDefMin}) is replaced by a hitting condition (see Definition \ref{DefEqHitCond}).  Beyond this substitution, our main result is quite similar to previous drift-and-minorization bounds. 

It is natural to ask: why would one bother to replace a minorization condition by a hitting condition? We defer a detailed discussion of this issue to  Section \ref{SecAppl}, but highlight here the main motivation: 

We \textbf{don't} make the strong claim that any chains of particular statistical interest satisfy our drift-and-hitting condition but fail to satisfy previous drift-and-minorization conditions.\footnote{We suspect that some modifications of the definitions would be needed to establish a precise equivalence for some class of Markov chains, but this is outside the scope of the current article.} In particular, we don't know of any interesting examples for which our new result can be used and it is \textit{impossible} to use previous results. 

Although we don't have examples for which a minorization condition is \textit{impossible} to establish, we give what we consider to be fairly strong evidence that it can be \textit{much harder} than a hitting condition and is never \textit{much easier}. Difficulty is of course subjective, but we think that the following provides some strong and fairly objective evidence that others will share this perception:

\begin{enumerate}
\item  We provide a specific class of examples for which a few previous papers have failed to establish a quantitatively-strong minorization bound. We provide a quantitatively-strong hitting bound, and furthermore this bound follows immediately from some well-known calculations for discrete Markov chains (and some soft arguments from analysis). Thus, there exist examples for which hitting conditions seem to be easier to obtain.
\item More generally, the question of when a hitting-time bound implies a minorization-like condition was an open question for many years, even in the setting of finite Markov chains. The finite case was eventually resolved by experts in the area in a few nontrivial papers \cite{oliveira2012mixing,finitemixhit}. For general Markov processes, similar connections were established in \cite{anderson2018mixhit} and \cite{anderson2019mixhit2}, using nonstandard techniques developed in \cite{Markovpaper}. 
    This provides some evidence that it is ``hard" to get a minorization bound from a hitting bound.
\item In the other direction, we provide a short and elementary argument showing that a minorization bound implies a quantitatively-similar hitting bound. This provides some evidence that it is ``easy" to get a hitting bound from a  minorization bound.

\end{enumerate}

Having summarized our motivation, we continue by setting some notation and recalling an important special case of the drift-and-minorization bound. Throughout this paper, unless otherwise mentioned, we use $\cX$ to denote the state space of the underlying Markov process and we always assume that $\cX$ is a topological space. Denote by $\{g(x,1,\cdot)\}_{x \in \cX}$ the transition kernel of some Markov chain on state space $\cX$ with (unique) stationary measure $\pi$ on the Borel $\sigma$-field $\BorelSets \cX$. For any $t\in \Nats$, we write $g(x,t,\cdot)$ to denote the $t$-step transition kernel generated from $\{g(x,1,\cdot)\}_{x\in \cX}$. 

We say that $g$ exhibits a ``drift'' or ``Lyapunov'' condition if there exists a function $V \, : \, \cX \mapsto [0,\infty)$ and constants $0 \leq \lambda < 1$, $0 \leq b < \infty$ so that 
\[ \label{EqDefDrift}
(gV)(x)=\int V(y) g(x,1,\dee y) \leq \lambda V(x) + b,
\]
for all $x \in \cX$. We say that $g$ exhibits a ``minorization'' condition if there exists a set $C\in \BorelSets \cX$, constants $0 < \epsilon \leq 1$, $t_0 \in \Nats$, and measure $\mu$ on $\cX$ so that 
\[ \label{EqDefMin}
g(x,t_0,\cdot) \geq \epsilon \, \mu(\cdot)
\]
for all $x \in C$. We say $g$ satisfies conditions \cref{EqDefDrift}, \cref{EqDefMin} \textit{compatibly} if it satisfies both and there exists $r > \frac{2 b}{1 - \lambda}$ such that
\[ 
C \supset \{x \in \cX \, : \, V(x) \leq r\}.
\] 

{\citep[][Thm.~12]{rosenthal1995minorization}} says that any kernel $g$ satisfying these conditions compatibly is geometrically ergodic, and also gives quantitative bounds on the convergence rate:

\begin{theorem} [Paraphrase of {\citep[][Thm.~12]{rosenthal1995minorization}}] \label{ThmDriftMinClassic}
Let $g$ satisfy conditions \cref{EqDefDrift}, \cref{EqDefMin} compatibly. Then there exists $M \, : \, \cX \mapsto [0,\infty)$ and $0 < \rho \leq 1$ such that
\[ \label{IneqDefMin}
\tvd{g(x,t,\cdot)}{\pi(\cdot)} \leq M(x) (1 - \rho)^{t}
\]
for all $x \in \cX$ and $t \in \Nats$. Moreover, there is an explicit formula for $M$ and $\rho$  in terms of the constants $\lambda,b,t_0,\epsilon,r$.
\end{theorem}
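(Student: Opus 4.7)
The plan is to prove the theorem by a coupling argument. I would construct a Markovian coupling $(X_t, Y_t)_{t \in \Nats}$ of two copies of the chain, with $X_0 = x$ deterministic and $Y_0 \sim \pi$, so that by stationarity $Y_t \sim \pi$ for every $t$. The coupling inequality $\tvd{g(x,t,\cdot)}{\pi(\cdot)} \leq \Pr(X_t \neq Y_t)$ then reduces the problem to showing geometric decay of the non-coalescence probability.

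The coupling would be run in blocks of length $t_0$. At each block-boundary time $k t_0$, if both $X_{k t_0}, Y_{k t_0} \in C$, I would decompose $g(\argdot, t_0, \cdot)$ as $\epsilon \mu + (1-\epsilon) R(\argdot, \cdot)$, where $R$ is the normalized residual kernel guaranteed by \cref{EqDefMin}: with probability $\epsilon$ draw a single common point from $\mu$ and set $X_{(k+1)t_0} = Y_{(k+1)t_0}$ equal to that point; with probability $1 - \epsilon$ advance each chain independently via its residual. If either chain lies outside $C$, the chains are advanced independently for $t_0$ steps (and once coupled, they are kept together). This coupling has the correct marginals by construction and succeeds at any block-boundary in $C \times C$ with probability at least $\epsilon$.

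The drift condition controls how often the bivariate process visits $C \times C$. Iterating \cref{EqDefDrift} gives $(g^{t_0} V)(z) \leq \lambda^{t_0} V(z) + b/(1-\lambda)$, and in particular $\pi(V) \leq b/(1-\lambda)$ by invariance. I would introduce a bivariate Lyapunov function $W(u,v) = 1 + V(u) + V(v)$ and show that, on the event that the chains have not yet coupled, $\EE[W(X_{(k+1)t_0}, Y_{(k+1)t_0}) \mid \cF_{k t_0}]$ contracts $W(X_{k t_0}, Y_{k t_0})$ by a factor $\alpha < 1$ whenever $W$ exceeds a threshold determined by $r$. The compatibility requirement $r > 2b/(1-\lambda)$ is precisely what ensures that below this threshold both marginals lie in the sublevel set $\{V \leq r\} \subseteq C$, so that the coupling attempt fires and contributes a factor of $(1-\epsilon)$.

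Combining these two ingredients by induction on $k$ bounds $\Pr(X_{k t_0} \neq Y_{k t_0})$ by a weighted sum of terms of the form $(1-\epsilon)^k$ and $\alpha^k (V(x) + \pi(V))$; optimizing a free mixing parameter that balances the coupling rate against the drift rate produces a single geometric rate $1-\rho \in (0,1)$ together with a prefactor $M(x)$ that is affine in $V(x)$, which yields \cref{IneqDefMin}. The main obstacle, and the bulk of Rosenthal's original argument, is the explicit bookkeeping needed to produce clean formulas for $\rho$ and $M$ in terms of $\lambda, b, t_0, \epsilon, r$: the coupling-and-drift skeleton is conceptually routine, but the algebraic optimization of the auxiliary parameters is the delicate step. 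I would not attempt to sharpen Rosenthal's constants and would simply import his explicit expressions once the structural estimate is in hand.
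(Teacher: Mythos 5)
The paper does not actually prove this statement---it is quoted as a paraphrase of Rosenthal's Theorem~12---and your outline is precisely the coupling-plus-bivariate-drift argument by which Rosenthal proves it: split $g(\cdot,t_0,\cdot)=\epsilon\mu+(1-\epsilon)R$ at block boundaries when both chains are in $C$, contract $W(u,v)=1+V(u)+V(v)$ off $C\times C$ (where $r>\frac{2b}{1-\lambda}$ is exactly what makes the contraction factor strictly less than one), and balance the coupling rate against the drift rate with a free parameter. So the proposal is correct and follows essentially the same route as the cited source; the one step you leave implicit---controlling $\EE[W]$ under the normalized residual kernel after a failed coupling attempt, which costs a factor absorbed into Rosenthal's constants---is exactly the bookkeeping you are entitled to import from his explicit formulas.
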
 

There are more sophisticated versions of this drift-and-minorization bound, including results in the same paper \cite{rosenthal1995minorization}, but most are based on two conditions that are similar to  \cref{EqDefDrift} and \cref{EqDefMin}.

The goal of the present paper is to present results similar to Theorem \ref{ThmDriftMinClassic}, in which the minorization condition \cref{EqDefMin} has been replaced by related hitting conditions.  
Recall that the \textit{hitting time} of a set $A \in \BorelSets \cX$ for a Markov chain $\{X_{t}\}_{t \in \Nats}$ is:
\[ \label{EqBasicHitDef}
\tau(A) = \min \{ t \in \Nats \, : \, X_{t} \in A \}.
\]
Note that $\tau(A)=\infty$ if $A=\emptyset$.
We now introduce the maximum hitting time (of sets with large measure):

\begin{definition}\label{defmaxhit}
Let $\alpha\in \PosReals$. The maximum hitting time (with parameter $\alpha$) is
\[
t_{H}(g,\alpha)=\sup\{\expect_{x}[\tau(A)]: x\in X, A\in \BorelSets \cX\ \text{such that}\ \pi(A)\geq \alpha\},
\]
where $\expect_{x}$ is the expectation of a measure in the space which generates the underlying Markov process and the subscript $x$ is the starting point of the Markov process.
\end{definition}

When the kernel is clear from the context, we write $t_{H}(\alpha) = t_{H}(g,\alpha)$. We will replace Inequality \cref{EqDefMin} with an assumption about $t_{H}(g',\alpha)$ for some kernel $g'$; in practice we will take $g'$ to be an appropriate restriction of the dynamics of $g$ to some small set.

\subsection{Guide to Paper}

We give our main drift-and-hit theorem in Section \ref{SecBdDC}, show that its conditions are satisfied for various common MCMC chains in Section \ref{SecUseGMH}, and describe why it can (sometimes) give better results than the usual drift-and-minorization theorem in Section \ref{SecAppl}.

\subsection{Previous Work}

This paper is based on the asymptotic equivalence of mixing and hitting times as established in the sequence of papers  \cite{oliveira2012mixing,finitemixhit,basu2015characterization, anderson2018mixhit,anderson2019mixhit2}. This relationship allows us to show that our hitting condition (see \cref{DefEqHitCond}) implies something very close to the minorization condition \cref{EqDefMin}, and thus to use the framework of \citet{rosenthal1995minorization}. We note that our recent works \citep{anderson2018mixhit,anderson2019mixhit2} are based on the work of \citet{finitemixhit,oliveira2012mixing}, which initially established this equivalence of mixing and hitting times in the special case that $\mathcal{X}$ is finite. \citep{anderson2018mixhit} and \citep{anderson2019mixhit2} make heavy use of hyperfinite representation of Markov processes, which is established by \citet{Markovpaper}.

There is a large literature exploring the drift-and-minorization approach to bounding the convergence rate of Markov chains, including various efforts to tweak its conditions. A simple and very closely-related paper is \cite{Roberts2001}, which is also primarily concerned with replacing the minorization condition by a condition that is in some sense equivalent but sometimes easier to verify in practice. 

More generally, see \textit{e.g.} \cite{cattiaux2013poincare} and works referenced therein for a discussion of the relationship between drift-and-minorization conditions, hitting time conditions, and other ways to measure the convergence rates of Markov chains (though in a slightly different setting from the present paper). As discussed in that paper, existing equivalences are not always useful for obtaining strong quantitative bounds on convergence rates - even if two conditions give \textit{some} bound on convergence rates, they may not give \textit{similar} bounds.

There is a large literature on calculating and bounding expected hitting times. We give here a very incomplete survey of some results that may be helpful: 

It is typically straightforward to give a recurrence leading to an exact expression, though this may be difficult to use (see \textit{e.g.}  \cite[Chapter 10]{markovmix} for the discrete case). In some cases, these recurrences have been solved in a useful way (see \textit{e.g.} \cite{palacios1996note} for the simplest setting); as we see later in this paper, these hitting-time formulas are sometimes easier to use than exact mixing-time formulas, even when both are available in some form. In other situations, exact formulas for hitting times may be too complicated to use. Popular tools for estimating hitting times in these settings include spectral methods (see \textit{e.g.} the main result of \cite{miclo2010absorption}, or \cite[Lemma 8]{shapira2019time}) or using tools from the metastability literature to ``reduce" a Markov chain to simpler dynamics (see \textit{e.g.} the recent survey \cite{landim2019metastable}, which includes formulas for hitting times that may be useful in other regimes).

\section{Bounded Mixing Times for Dominated Chains} \label{SecBdDC}

We recall the definition of \emph{mixing time}:

\begin{definition}\label{defmixwp}
Fix $\epsilon\geq 0$ and a non-empty set $A\in \BorelSets \cX$. 
The \emph{mixing time} in $A$ with respect to $\epsilon$ is defined as
\[
t_{m}(\epsilon,A)=\min\left\{t\geq 0: \sup_{x\in A}\tvd{g(x,t,\cdot)}{\pi(\cdot)}\leq \epsilon\right\}.
\]
The \emph{lazy mixing time} in $A$ with respect to $\epsilon$ is
\[
t_{L}(\epsilon,A)=\min\left\{t\in \Nats: \sup_{x\in A}\parallel g_{L}(x,t,\cdot)-\pi(\cdot) \parallel_{\mathrm{TV}}\leq \epsilon\right\},
\]
where $g_{L}(x,1,\cdot)=\frac{1}{2} g(x,1,\cdot)+\frac{1}{2} \delta_{x}(\cdot)$.
\end{definition}

For notational convenience, we write $t_{m}(A)$ or $t_{L}(A)$ to mean $t_{m}(\frac{1}{4},A)$ or $t_{L}(\frac{1}{4},A)$, write $t_{m}(\epsilon)$ or $t_{L}(\epsilon)$ to mean $t_{m}(\epsilon, \cX)$ or $t_{L}(\epsilon, \cX)$, and write $t_{m}$ or $t_{L}$ to mean $t_{m}(\frac{1}{4},\cX)$ or $t_{L}(\frac{1}{4},\cX)$.

In practice, it is rare for transition kernels $g$ corresponding to MCMC algorithms on unbounded state spaces to have finite mixing times (this would require jumps of unbounded size, and these are often difficult to construct). For this reason, the mixing and maximal hitting times cannot be directly compared in a meaningful way. We will fix this problem by constructing related Markov chains that both (i) have finite mixing times and (ii) have dynamics that are ``slightly faster" than those of $g$. We introduce the following definition to formalize this idea:

\begin{definition}[Dominated Chain]
Let $g$ be a transition kernel on state space $\cX$ with associated $\sigma$-field $\BorelSets \cX$. We say that a transition kernel $g^{(S)}$ with support contained in $S$ is \textit{$S$-dominated by $g$} if it satisfies the following two properties:

\begin{enumerate}
\item The stationary measure $\pi^{(S)}$ of $g^{(S)}$ is given by
\[ \label{DefRestMeas}
\pi^{(S)}(A) = \frac{\pi(A \cap S)}{\pi(S)}
\]
for all $A \in \BorelSets \cX$.
\item The transitions of $g^{(S)}$ satisfy
\[
g^{(S)}(x,1,A) \geq g(x,1,A) 
\]
for all $x \in S$ and $A \subset S$ with $A \in \BorelSets \cX$.
\end{enumerate}

\end{definition}

There are many constructions in the literature that satisfy this $S$-domination property. The most famous of these is probably the \textit{trace} of a chain on $S$, but in this paper we focus on restrictions that are more specialized to Metropolis--Hastings and Gibbs samplers. 

\begin{definition} [Metropolis-Hastings Restriction of Sampler] \label{DefResMH}

Fix an ergodic transition kernel $g$ with stationary measure $\pi$ and fix $S \in \BorelSets \cX$ with $\pi(S) > 0$. We define the \textit{restriction} $g^{(S)}$ of $g$ to $S$ to be the transition kernel on $S$ given by the formula:
\[
g^{(S)}(x,1,A) = g(x,1,A \cap S) + \textbf{1}_{x \in S} \, (1-g(x,1,A \cap S)).
\]
\end{definition}

We note that this restricted sampler is exactly the ``usual" Metropolis-Hastings kernel with proposal $g$ and target $\pi^{(S)}$ when the latter is defined (\textit{e.g.} when $g$ is reversible with respect to some measure, and $\{g(x,1,\cdot)\}_{x \in \cX}$ and $\pi(\cdot)$ have densities with respect to a common dominating measure).

In the special case of the Gibbs sampler, another restriction is sometimes useful:

\begin{definition}[Restricted Gibbs Samplers] \label{DefRestGibbs}
Recall that the Gibbs sampler is uniquely defined by the target distribution. If $g$ is a Gibbs sampler targeting $\pi$ and $S \in \BorelSets \cX$ has $\pi(S) > 0$, we define the \textit{Gibbs restriction} $g^{(S)}$ of $g$ to $S$ to be the Gibbs sampler $g^{(S)}$ targeting the measure $\pi^{(S)}$ defined in \cref{DefRestMeas}.
\end{definition}

It is clear that both of these constructions give $S$-dominated chains, as does the usual trace process (see \textit{e.g.}  \cite[Section 6]{beltran2010tunneling} for the last result); in general, all three are different. For the remainder of this section, we will always fix a kernel $g$ and denote by $g^{(C)}$ any specific kernel that is $C$-dominated by $g$.

The following hitting condition plays an essential role throughout the paper. 

\begin{definition} \label{DefEqHitCond}
Fix $C \in \BorelSets \cX$ with $\pi(C)>0$. For $\alpha \in \PosReals$, denote by $t_{H}^{(C)}(\alpha)$ the maximum hitting time of large sets for $g^{(C)}$. We say that $g$ satisfies the \emph{$C$-hitting condition} if there exists $0 < \alpha < 0.5$ such that $t_{H}^{(C)}(\alpha) < \infty$.
\end{definition}

For a fixed collection of constants $D = \{d_{\alpha}: 0<\alpha<0.5\}$, define $\mathcal{G}(D)$ to be the collection of kernels $g$ satisfying $t_{m} \leq d_{\alpha} t_{H}(\alpha)$.

We recall that the main results in \cite{oliveira2012mixing,finitemixhit,basu2015characterization, anderson2018mixhit,anderson2019mixhit2} show that there exist \textit{universal} constants $D$ so that this is satisfied for all $g$ in certain large classes.

\begin{definition} \label{DeftDriftHitComp}
We say that $g$ satisfies the drift condition \cref{EqDefDrift} and the $C$-hitting condition (\cref{DefEqHitCond}) \textit{compatibly} for $\alpha, C, D$ if:
\begin{enumerate}
\item $g$ satisfies both the drift condition \cref{EqDefDrift} and the $C$-hitting condition.

\item $g^{(C)}$ satisfies the drift condition \cref{EqDefDrift} with the same $V,\lambda,b$ as $g$.

\item $g^{(C)} \in \mathcal{G}(D)$.

\item $C \supset \{x \in \cX \, : \, V(x) \leq r\}$ for some $r>\frac{2b}{1-\lambda}$.
\end{enumerate}
\end{definition}

It is straightforward to check that these conditions imply that $t_{H}^{(C)}(\alpha) < \infty$ for \textit{all} $0 <\alpha < 1$, though for convenience we will typically focus on only one or two values at a time.

The main result of this section, whose proof is deferred until the end, is:

\begin{theorem}\label{tracegeoergodic}

Let $g$ be a transition kernel on the state space $\cX$ that satisfies \cref{EqDefDrift} and \cref{DefEqHitCond} compatibly for some $C\in \BorelSets {\cX}$ with $\pi(C)>0$ and some $D, \alpha$. Define
\[
C' = \{x \in \cX \, : \, V(x) \leq r' \}
\]
for some $r' > \frac{2b}{1 -\lambda}$. 
Suppose $C\supset \{x\in \cX\, : \, V(x)\leq r\}$ for some $r>\frac{2b+24r'}{1-\lambda}$.

Under these assumptions, there exists $M \, : \, \cX \mapsto [0,\infty)$ and $0 < \rho \leq 1$ such that
\[
\tvd{g(x,t,\cdot)}{\pi(\cdot)} \leq M(x) (1 - \rho)^{t}
\]
for all $x \in \cX$ and $t \in \Nats$. Moreover, there is an explicit formula for $M$ and $\rho$  in terms of the constants $\lambda,b, r, D, \alpha$.
\end{theorem}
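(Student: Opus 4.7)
The plan is to derive, from the compatible drift-and-hit conditions, a genuine minorization condition for $g$ on the smaller set $C' = \{x \in \cX \, : \, V(x) \leq r'\}$, and then to invoke the classical Rosenthal bound \cref{ThmDriftMinClassic} directly. The proof splits naturally into three linked stages.

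The first stage turns the hitting hypothesis into a mixing bound on the dominated chain and then into a minorization on $C$. From $t_H^{(C)}(\alpha) < \infty$ and $g^{(C)} \in \mathcal{G}(D)$, I obtain $T := t_m(g^{(C)}) \leq d_\alpha\, t_H^{(C)}(\alpha)$, so $\|g^{(C)}(x, T, \cdot) - \pi^{(C)}\|_{\mathrm{TV}} \leq 1/4$ for every $x \in C$. From this TV bound I extract, by a maximal-coupling/composition argument (iterating a constant number of times and, if necessary, restricting $\pi^{(C)}$ to a ``thick'' subset on which the TV-subtraction cannot render the bound vacuous), a genuine minorization of the form $g^{(C)}(x, kT, \cdot) \geq \epsilon_0\, \nu(\cdot)$ valid for all $x \in C$, for some integer $k$, probability measure $\nu$ on $C$, and $\epsilon_0 > 0$. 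This is the technical heart of the argument and leans on the mixing--hitting equivalence developed in \cite{oliveira2012mixing,finitemixhit,anderson2018mixhit,anderson2019mixhit2}.

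The second stage transfers this minorization from $g^{(C)}$ to $g$, but only after shrinking the small set to $C'$. For $x \in C'$, paths of $g^{(C)}$ and $g$ agree as long as the trajectory never leaves $C$, and the dominated chain differs from $g$ only through ``bounce'' contributions that cannot decrease probabilities of sets lying in $C$. Iterating \cref{EqDefDrift} gives $\mathbb{E}_x[V(X_t)] \leq r' + b/(1-\lambda)$ uniformly in $t$, and a Markov/Doob-style maximal inequality controls $\mathbb{P}_x(\sup_{t \leq kT} V(X_t) > r)$; the strong hypothesis $r > (2b + 24 r')/(1-\lambda)$ furnishes the ``buffer'' between $C'$ and the complement of $C$ needed to force this exit probability below $\epsilon_0/2$. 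Combining with the result of the first stage yields $g(x, kT, \cdot) \geq \epsilon_1\, \mu(\cdot)$ for all $x \in C'$, for some probability measure $\mu$ and $\epsilon_1 > 0$.

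Finally, since $r' > 2b/(1-\lambda)$, the drift condition \cref{EqDefDrift} and the derived minorization are compatible on $C'$ in the sense of \cref{ThmDriftMinClassic}, and that theorem delivers the claimed $M$ and $\rho$ with explicit dependence on $\lambda, b, r, D, \alpha$. I expect the main obstacle to be the first stage: converting a TV-to-stationarity bound into a uniform minorization with a \emph{common} lower-bounding measure is not automatic in general state spaces, and it is precisely there that the mixing--hitting theory of the cited works carries the load.
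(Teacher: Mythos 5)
Your overall architecture---(i) turn the hitting hypothesis into a mixing bound for $g^{(C)}$, (ii) couple $g$ and $g^{(C)}$ from $C'$ using the drift buffer $r>\frac{2b+24r'}{1-\lambda}$, (iii) feed the result into a Rosenthal-type theorem---tracks the paper up to step (ii); your drift/Markov-inequality coupling estimate is essentially the proof of \cref{traceminor}. The gap is in what you try to extract. In Stage 1 you assert that the uniform bound $\tvd{g^{(C)}(x,T,\cdot)}{\pi^{(C)}(\cdot)}\leq\frac14$ can be upgraded, by ``maximal coupling/composition'' plus a ``thick subset'' restriction, to a genuine minorization $g^{(C)}(x,kT,\cdot)\geq\epsilon_0\nu(\cdot)$ with a \emph{common} measure $\nu$, and you say the mixing--hitting papers carry this load. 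They do not: those results compare $t_m$ with $t_H$ and say nothing about producing a common minorizing measure, and maximal coupling by itself only yields pair-dependent measures $\mu_{x,y}$, i.e.\ exactly the pseudo-minorization of \cite{Roberts2001}. The upgrade is in fact provable with explicit constants---writing $g^{(C)}(z,T,\cdot)\geq\pi^{(C)}-\mu_z$ with $\mu_z(\cX)\leq\frac14$ and using stationarity of $\pi^{(C)}$ gives $g^{(C)}(x,2T,\cdot)\geq\bigl(\tfrac34\pi^{(C)}-\bar\mu\bigr)^{+}$ with $\bar\mu=\int\pi^{(C)}(dz)\,\mu_z$ a fixed measure of mass at most $\frac14$---but your proposal contains no such argument, so Stage 1 as written is a hole. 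Stage 2 has a second, sharper problem: the coupling gives $g(x,kT,\cdot)\geq g^{(C)}(x,kT,\cdot)-\eta_x(\cdot)$, where $\eta_x$ is an $x$-dependent defect measure of small total mass; combined with Stage 1 this yields $g(x,kT,\cdot)\geq\epsilon_0\nu(\cdot)-\eta_x(\cdot)$, and subtracting an $x$-dependent defect does not produce a minorization by a single $\mu$: as $x$ ranges over $C'$ the defects can collectively cover the support of $\nu$. So ``combining'' the two stages does not, by itself, give $g(x,kT,\cdot)\geq\epsilon_1\mu(\cdot)$, and this is not a one-line repair.

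This is precisely why the paper never asks for a common minorizing measure. It proves the pairwise bound $\sup_{x,y\in C'}\tvd{g(x,t,\cdot)}{g(y,t,\cdot)}\leq\frac23$ (\cref{traceminor}), for which the coupling step only needs $\mathbb{P}[X_T\neq Y_T]\leq\frac1{12}$ and pair-dependent measures are harmless, and then invokes the pseudo-minorization result of \cite{Roberts2001} through \cref{jeffmodify}, rather than \cref{ThmDriftMinClassic}. If you insist on your route through a true minorization on $C'$, you must both supply the Stage-1 conversion (the stationarity argument above does it at time $2T$) and redo Stage 2 so that a common measure survives the transfer from $g^{(C)}$ to $g$---for instance by running the Stage-1 argument directly on $g$ started from $C'$, which needs extra care because $\pi^{(C)}$ is not stationary for $g$. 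The pseudo-minorization route sidesteps both difficulties while keeping $M$ and $\rho$ explicit in $\lambda,b,r,D,\alpha$.
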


The main difference between \cref{tracegeoergodic} and \cref{ThmDriftMinClassic} is that we have replaced a \textit{minorization} condition on a small set with a \textit{hitting time} condition on a small set; the small sets themselves are of similar size, determined by the drift condition. 

Another obvious difference is that we have stated our hitting-time condition in terms of a $C$-dominated kernel $g^{(C)}$ rather than the original kernel $g$. In principle this change could have a large impact on the applicability of the results, sometimes making it harder and sometimes easier. In practice we would be surprised if this change had a large impact. In simple examples, such as the example studied in \cref{ThmSimpleApp} with sets $C$ that are intervals with stationary measure substantially over $0.5$, the three $C$-dominated chains discussed in this paper have dynamics that are quite similar to those of $g$; as a result similar analyses are possible.


We need some additional notation before proving \cref{tracegeoergodic}. Let $t_m^{(C)}$ denote the mixing time for $g^{(C)}$. 
We now show that we can obtain a minorization condition similar to \cref{EqDefMin} by combining the hitting condition in \cref{DefEqHitCond} and the drift condition in \cref{EqDefDrift}.     

\begin{theorem}\label{traceminor}
Given the assumptions of \cref{tracegeoergodic}, for $t>t_{m}^{(C)}(C')$
\[
\sup_{x,y\in C'} \| g(x,t,\cdot) - g(y,t,\cdot) \|_{\mathrm{TV}} \leq \frac{2}{3}.
\]
\end{theorem}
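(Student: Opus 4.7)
The plan is to decompose via the triangle inequality through $g^{(C)}$: for $x,y\in C'$,
\[
\|g(x,t,\cdot)-g(y,t,\cdot)\|_{\mathrm{TV}} \leq \|g(x,t,\cdot)-g^{(C)}(x,t,\cdot)\|_{\mathrm{TV}} + \|g^{(C)}(x,t,\cdot)-g^{(C)}(y,t,\cdot)\|_{\mathrm{TV}} + \|g^{(C)}(y,t,\cdot)-g(y,t,\cdot)\|_{\mathrm{TV}}.
\]
Since $t>t_{m}^{(C)}(C')=t_{m}^{(C)}(1/4,C')$, a further triangle inequality through $\pi^{(C)}$ bounds the middle term by $2\cdot(1/4)=1/2$. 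It therefore suffices to show each outer term is below $1/12$ (with some slack).

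For an outer term I would use the natural coupling afforded by $C$-domination. Because $g^{(C)}(z,1,A)\geq g(z,1,A)$ for every $z\in C$ and $A\subseteq C$, one can jointly simulate chains $(X_s)$ following $g$ and $(X^{(C)}_s)$ following $g^{(C)}$ from the same $x\in C'$ so that they step identically as long as $X_s\in C$. Writing $\tau := \inf\{s\geq 0 : X_s\notin C\}$, this yields
\[
\|g(x,t,\cdot)-g^{(C)}(x,t,\cdot)\|_{\mathrm{TV}} \leq \Pr_{x}[\tau\leq t].
\]
Since $C\supseteq\{V\leq r\}$, the event $\{\tau\leq t\}$ is contained in $\{\sup_{s\leq t} V(X_s)>r\}$, reducing the problem to a first-passage bound for the Lyapunov function $V$.

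The crux is to bound this hitting probability uniformly in $t$ using only the drift inequality $\mathbb{E}[V(X_{s+1})\mid\mathcal{F}_s]\leq\lambda V(X_s)+b$. Setting $c:=b/(1-\lambda)$, the drift rearranges to $\mathbb{E}[V(X_{s+1})-c\mid\mathcal{F}_s]\leq\lambda(V(X_s)-c)$, so $\lambda^{-s}(V(X_s)-c)$ is a supermartingale. I would apply optional stopping at $\tau\wedge t$, use $V(X_0)\leq r'$ and $V(X_\tau)>r$ on $\{\tau\leq t\}$, and control the residue on $\{\tau>t\}$ via $V\geq 0$ and the gap $r>c$, aiming for an estimate
\[
\Pr_{x}[\tau\leq t] \leq \frac{r'}{r-c}.
\]
The hypothesis $r>(2b+24r')/(1-\lambda)=2c+24r'/(1-\lambda)$ then yields $r-c>24r'$, hence $\Pr_{x}[\tau\leq t]<1/24$, and combining the three pieces gives $\|g(x,t,\cdot)-g(y,t,\cdot)\|_{\mathrm{TV}} \leq 1/24 + 1/2 + 1/24 = 7/12 \leq 2/3$.

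The main obstacle is this exit-probability bound. A naïve optional-stopping computation with $\lambda^{-s}(V(X_s)-c)$ picks up a residue of order $c\lambda^{-t}$ on $\{\tau>t\}$, which is useless for large $t$. Handling this cleanly will likely require working with the stopped process $V(X_{s\wedge\tau})$ together with a truncated or positive-part version of the supermartingale, or a more direct Foster--Lyapunov-style argument that exploits the strict gap $V(X_\tau)>r$ relative to the stationary level $c$.
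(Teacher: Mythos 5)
Your decomposition through $g^{(C)}$ at time $t$ creates a requirement you cannot meet. The theorem covers every $t>t_{m}^{(C)}(C')$, so your outer terms $\|g(x,t,\cdot)-g^{(C)}(x,t,\cdot)\|_{\mathrm{TV}}\le\Pr_{x}[\tau\le t]$ must be small uniformly in $t$; in effect you need $\Pr_{x}[\tau<\infty]\le r'/(r-c)$ with $c=b/(1-\lambda)$, where $\tau$ is the exit time from $C$. That is false in general: under \cref{EqDefDrift} alone, a geometrically ergodic chain whose stationary measure charges $\{V>r\}$ (e.g.\ an AR(1) chain with $V(x)=|x|$) exits the sublevel set $C$ in finite time almost surely, so $\Pr_{x}[\tau\le t]\to 1$. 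The residue of order $c\lambda^{-t}$ you flag in your last paragraph is therefore not a technicality to be engineered away by a truncated supermartingale or a sharper Foster--Lyapunov argument; it signals that the uniform-in-$t$ exit bound you are aiming for simply does not hold. (The quantity $\|g(x,t,\cdot)-g^{(C)}(x,t,\cdot)\|_{\mathrm{TV}}$ itself need not blow up, but your coupling bound on it does, and controlling it uniformly in $t$ by other means would essentially presuppose the convergence of $g$ that the theorem is meant to deliver.)

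The paper's proof avoids this by making the comparison only at the single finite horizon $T=t_{m}^{(C)}(C')$: couple $X\sim g$ and $Y\sim g^{(C)}$ from the same $x\in C'$ so that they agree until the trajectory leaves $C$, bound $\Pr[X_{T}\ne Y_{T}]$ by the probability that $\max_{0\le s\le T}V$ exceeds $r$ (a finite-horizon event, handled with a union bound over the $T+1$ steps plus Markov's inequality, using that $g^{(C)}$ satisfies the same drift by condition (2) of \cref{DeftDriftHitComp}), and conclude $\|g(x,T,\cdot)-\pi^{(C)}(\cdot)\|_{\mathrm{TV}}\le \tfrac14+\tfrac{1}{12}=\tfrac13$, hence $\sup_{x,y\in C'}\|g(x,T,\cdot)-g(y,T,\cdot)\|_{\mathrm{TV}}\le\tfrac23$. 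The extension to all $t>T$ then comes for free: both arguments of that supremum evolve under the same kernel $g$, and total variation is non-expansive under a common Markov kernel, so $\sup_{x,y\in C'}\|g(x,t,\cdot)-g(y,t,\cdot)\|_{\mathrm{TV}}$ is non-increasing in $t$. This monotonicity step is exactly what your route lacks --- your outer terms compare two \emph{different} kernels at time $t$, so no contraction argument applies to them --- and the fix is to prove your estimate at $t=T$ only and then contract; your constant bookkeeping ($\tfrac{1}{24}+\tfrac12+\tfrac{1}{24}\le\tfrac23$) would then be fine, provided the finite-horizon exit probability is bounded as in the paper rather than by optional stopping alone.
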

\begin{proof}
Fix $x\in C'$ and pick $0<\alpha<0.5$ as in \cref{DefEqHitCond}. Let $T=t_{m}^{(C)}(C')\leq d_{\alpha}t_{H}^{(C)}(\alpha)<\infty$. 
As $x\in C'$, by assumption, we have 
\[ \label{IneqTraceMinnow} 
\tvd{g^{(C)}(x,T,\cdot)}{\pi^{(C)}(\cdot)} \leq \frac{1}{4}.
\]

Next, we denote by $\{X_{t}\}_{t \in \Nats}$ a Markov chain with transition kernel $g$ starting at point $X_{0}=x\in C'$, and we denote by $\{Y_{t}\}_{t \in \Nats}$ a Markov chain with transition kernel $g^{(C)}$ starting at the same point $x$. By \cref{EqDefDrift}, we have for all $t \in \Nats$ 
\[
\mathbb{E}[V(X_{t}) | X_{t-1}] \leq \lambda V(X_{t-1})+b.
\]
 
Iterating this bound, we have 
\begin{align*}
\mathbb{E}[V(X_{t})] &\leq \lambda\mathbb{E}[V(X_{t-1})]+b \leq \lambda(\lambda\, \mathbb{E}[V(X_{t-2})] + b) + b\\
&\leq \ldots \leq \lambda^{t} V(X_{0}) + \frac{b}{1 -\lambda}.
\end{align*}

By assumption, $\max\{V(X_{0}), V(Y_{0})\} \leq r'$, so we have
\[
\max\{\mathbb{E}[V(X_{t})], \, \mathbb{E}[V(Y_{t})]\} \leq  \lambda^{t} r' + \frac{b}{1 -\lambda}
\]
for all $t \in \Nats$. By a union bound and Markov's inequality, we have
\begin{align*}
\mathbb{P}\left[\max_{0 \leq t \leq T} \{V(X_{t}), V(Y_{t})\}>r\right]
&\leq \mathbb{P}\left[\max_{0 \leq t \leq T} V(X_{t})>r\right] + \mathbb{P}\left[\max_{0 \leq t \leq T}  V(Y_{t})>r\right]\\
&\leq \frac{2}{r-\frac{b}{1-\lambda}} \left(\sum_{t=0}^{T} \lambda^{t} r' \right)\leq \frac{2}{r-\frac{b}{1-\lambda}} \left(\frac{r'}{1 - \lambda}\right) \\
&=\frac{2r'}{d(1-\lambda)-b}\leq\frac{2r'}{b+24r'}\leq \frac{1}{12}.
\end{align*}
By the second part of the definition of the dominating chain in \cref{DeftDriftHitComp}, it is possible to couple $\{X_{t},Y_{t}\}$ so that they are equal until (at least) the first time that either leaves $C$. Applying the bound immediately above this paragraph and the assumption $C\supset \{x\in \cX\, : \, V(x)\leq r\}$, we find
\[
\mathbb{P}[X_{T} \neq Y_{T}] \leq \mathbb{P}\left[\max_{0 \leq t \leq T} \{V(X_{t}), V(Y_{t})\}>r\right] \leq \frac{1}{12}.
\]
Applying this with \cref{IneqTraceMinnow}, we have $\tvd{g(x,T,\cdot)}{\pi^{(C)}(\cdot)}\leq \frac{1}{4} + \frac{1}{12} = \frac{1}{3}$. By triangle inequality again, we have $\sup_{x,y\in C'}\tvd{g(x,t,\cdot)}{g(y,t,\cdot)}\leq \frac{2}{3}$ for all $t>T=t_{m}^{(C)}(C')$. 
\end{proof}

The following result is a minor modification of {\citep[][Thm.~12]{rosenthal1995minorization}}. Before stating it, we recall the \textit{pseudo-minorization} condition of \cite{Roberts2001}. The usual minorization condition, Inequality \eqref{EqDefMin}, requires that $g(x,t_{0},\cdot)$ be ``minorized" by a single measure $\mu$ for all $x \in C$:
\[
g(x,t_0,\cdot) \geq \epsilon \, \mu(\cdot).
\]
The pseudo-minorization condition relaxes this, requiring only that all pairs $x, y \in C$ have some minorizing measure $\mu_{x,y}$ satisfying:
\[ \label{EqPseudoMDef}
g(x,t_{0},\cdot), \, g(y,t_{0},\cdot) \geq \epsilon \mu_{x,y}(\cdot).
\]

\begin{lemma}\label{jeffmodify}
Let $g$ satisfy \cref{EqDefDrift}. 
Let $S$ be a measurable subset of $\cX$ such that 
\begin{enumerate}
\item $S\supset\{x\in \cX: V(x)\leq r\}$ for some $r>\frac{2b}{1-\lambda}$. 
\item $\sup_{x,y\in S}\| g(x,1,\cdot) - g(y,1,\cdot) \|_{\mathrm{TV}}\leq 1-\epsilon$ for some $\epsilon>0$.
\end{enumerate}
Then for every $0<p<1$ and every $x\in \cX$ we have 
\[\label{equ_temp_RHS}
\begin{split}
&\|g(x,t,\cdot)-\pi(\cdot)\|_{\mathrm{TV}}\\
&\leq (1-\epsilon)^{pt}+\left(1+\frac{b}{1-\lambda}+V(x)\right)\left[\left(\frac{1+2b+\lambda r}{1+r}\right)^{1-p}(1+2(\lambda r+b))^{p}\right]^{t}.
\end{split}
\]
\end{lemma}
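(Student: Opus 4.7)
My plan is to follow Rosenthal's coupling argument (\cite{rosenthal1995minorization}, Thm.~12) essentially verbatim, with the pointwise minorization condition replaced by the one-step pseudo-minorization encoded in assumption~(2). First I would observe that the assumption $\sup_{x,y\in S}\|g(x,1,\cdot)-g(y,1,\cdot)\|_{\mathrm{TV}}\leq 1-\epsilon$ is equivalent, by the maximal coupling theorem, to the existence for each $x,y\in S$ of a coupling $(X',Y')$ of $g(x,1,\cdot)$ and $g(y,1,\cdot)$ with $\mathbb{P}(X'=Y')\geq\epsilon$. This recovers the pseudo-minorization \cref{EqPseudoMDef} of \cite{Roberts2001} at $t_{0}=1$, with shared minorizing measure $\mu_{x,y}\propto\min(g(x,1,\cdot),g(y,1,\cdot))$.

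Next I would construct a Markovian coupling $(X_{n},Y_{n})_{n\geq 0}$ with $X_{0}=x$ and $Y_{0}\sim\pi$, using the maximal pairwise coupling above whenever $(X_{n},Y_{n})\in S\times S$, and arranging that the two chains stay merged after they first agree. The standard coupling inequality then gives $\|g(x,t,\cdot)-\pi(\cdot)\|_{\mathrm{TV}}\leq\mathbb{P}(X_{t}\neq Y_{t})\leq\mathbb{E}[(1-\epsilon)^{N_{t}}]$, where $N_{t}$ counts the uncoupled visits of the joint chain to $S\times S$ before time~$t$.

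To bound $\mathbb{E}[(1-\epsilon)^{N_{t}}]$ I would track the bivariate Lyapunov function $W(x,y)=1+V(x)+V(y)$. The drift condition \cref{EqDefDrift}, combined with $r>2b/(1-\lambda)$ from condition~(1), yields a geometric drift of $W$ toward $S\times S$ with contraction factor $\alpha=(1+2b+\lambda r)/(1+r)<1$ outside $S\times S$, and a one-step ``overshoot'' factor $\beta=1+2(\lambda r+b)$ when returning. Rosenthal's exponential Markov argument, parameterized by the free exponent $p\in(0,1)$, then splits the event $\{N_{t}\geq pt\}$ (contributing $(1-\epsilon)^{pt}$) from its complement (contributing at most $\mathbb{E}[W(X_{0},Y_{0})]\,(\alpha^{1-p}\beta^{p})^{t}$). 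Using $\mathbb{E}_{\pi}[V]\leq b/(1-\lambda)$ for the stationary copy $Y_{0}$ produces the prefactor $1+b/(1-\lambda)+V(x)$ appearing in \eqref{equ_temp_RHS}.

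The main obstacle is bookkeeping rather than new technique: one must verify that Rosenthal's calculations, particularly the exponential supermartingale producing the $\alpha^{1-p}\beta^{p}$ term, remain valid when the single minorizing measure $\mu$ is replaced by the pair-dependent measures $\mu_{x,y}$ from condition~(2). Since minorization enters Rosenthal's proof only to guarantee an unconditional probability-$\epsilon$ coupling event whenever the joint chain visits $S\times S$, and the specific form of $\mu$ never appears in the recursion for $W(X_{n},Y_{n})$, no substantive modification is required.
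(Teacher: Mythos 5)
Your proposal is correct and follows essentially the same route as the paper: you convert the pairwise total-variation bound into the pseudo-minorization \eqref{EqPseudoMDef} (via the maximal-coupling overlap measure, equivalent to the paper's explicit construction of $\mu_{xy}$ from a maximizing set $C_{xy}$), and then run the bivariate drift/coupling argument with $W(x,y)=1+V(x)+V(y)$ and the constants $\alpha=(1+2b+\lambda r)/(1+r)$, $\beta=1+2(\lambda r+b)$. The only difference is that the paper simply cites \cite[Proposition 2]{Roberts2001} for this second step, whereas you re-sketch its proof (Rosenthal's Theorem 12 adapted to pair-dependent minorizing measures), which is exactly what that cited proposition does.
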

	\begin{proof}
		We first show the condition $\sup_{x,y\in S}\| g(x,1,\cdot) - g(y,1,\cdot) \|_{\mathrm{TV}}\leq 1-\epsilon$ implies a pseudo-minorization condition from \cite{Roberts2001}, which is weaker than the minorization condition. By the definition of total variation distance, for any $x,y\in S$, there exists $C_{xy}\in \BorelSets \cX$ such that
		\[
		\| g(x,1,\cdot) - g(y,1,\cdot) \|_{\mathrm{TV}}= g(x,1,C_{xy}) - g(y,1,C_{xy}) \leq 1-\epsilon.
		\]
		Then we have $g(x,1,C_{xy}^c) + g(y,1,C_{xy})\ge\epsilon$. For any $A\in  \BorelSets \cX$, define
		\[
		\mu_{xy}(A):=\frac{g(y,1,A\cap C_{xy})+g(x,1,A\cap C_{xy}^c)}{g(y,1,C_{xy})+g(x,1,C_{xy}^c)}.
		\]
		Then it can be verified that $\mu_{xy}(\cdot)$ is a valid probability measure and
		\[
		g(x,1,\cdot)\ge \epsilon \mu_{xy}(\cdot),\quad g(y,1,\cdot)\ge \epsilon \mu_{xy}(\cdot).
		\]
		Therefore, $S$ is a $(1,\epsilon,\{\mu_{xy}\})$-pseudo-small set in the sense of \cite{Roberts2001}. The result then follows directly from \cite[Proposition 2]{Roberts2001}.
	\end{proof}

\begin{proof} [Proof of \cref{tracegeoergodic}]
We first apply \cref{traceminor} to obtain the second condition in \cref{jeffmodify} and then apply \cref{jeffmodify}. To see why the r.h.s.~ of \cref{equ_temp_RHS} can be written in the form of $M(x)(1-\rho)^t$, one first choose $p$ such that $(1-\epsilon)^p=\left(\frac{1+2b+\lambda r}{1+r}\right)^{1-p}(1+2(\lambda r+b))^p=:(1-\rho)$, then choose $M(x):=2+\frac{b}{1-\lambda}+V(x)$ to get the desired result.  
\end{proof}

\section{Application to Gibbs and Metropolis--Hastings Samplers} \label{SecUseGMH}

\subsection{Application to Gibbs Samplers}

We begin by studying a large class of Gibbs samplers defined in our companion paper \citep{anderson2019mixhit2}. We consider a class of Gibbs samplers targeting a measure $\pi$ supported on a compact subset of Euclidean space; without loss of generality we assume that the support of $\pi$ is inside of $[0,1]^{d}$. For $j \in \{1,2,\ldots,d\}$, let $p_{j} \, : \, \mathbb{R}^{d} \mapsto \mathbb{R}$ denote the projection to the $j$-th coordinate; that is, $p_{j}(x_{1},\ldots,x_{d}) = x_{j}$. For $x \in \cX$ and $j \in \{1,2,\ldots,d\}$, let
\[
A_{x,j} = \{a \in [0,1]^{d} \, : \, \forall \, i \neq j, \, {p_{i}(a) = p_{i}(x)}\},
\]
be the line in $[0,1]^{d}$ that passes through $x$ and is parallel to the $j$'th coordinate axis. Let $B_{x,j}$ be the connected component (the largest connected subset) of $A_{x,j}$ that contains $x$. For the remainder of this section, we consider fixed $H \in \{A,B\}$, so that \textit{e.g.}\ either $H_{x,j} = A_{x,j}$ or $H_{x,j} = B_{x,j}$. 

We note that our sets $H_{x,j}$ are always the intersection of a hyperplane with some set. When we write $\int_{H_{x,j}} f(y) \dee y$, the measure ``$\dee y$" always represents the Lebesgue measure on this hyperplane, \textit{not} Lebesgue measure on all of $\mathbb{R}^{d}$. We make the following assumption on $\pi$: 

\begin{assumption}\label{targetcts}
$\pi$  has a continuous density function $\rho$ with respect to the Lebesgue measure. Furthermore, $\int_{H_{x,j}} \rho(y) \dee y > 0$ for every $x \in \cX$ and $j \in \{1,2,\ldots,d\}$.
\end{assumption}

We now set some further notation. Set $\cX = [0,1]^{d}$ and let $\pi_{x,j}$ be the usual conditional distribution of $\pi$ on $H_{x,j}$ (that is, the measure with density $\rho$ and support $H_{x,j}$). Define the typical Gibbs sampler with target $\pi$ by
\[\label{defngibbs}
g(x,1,A) = \frac{1}{d} \sum_{j=1}^{d} \pi_{x,j}(A)
\]
for every $x\in \cX$ and $A\in \BorelSets \cX$. \textit{In the context of the Gibbs sampler only}, the $C$-dominated chain $g^{(C)}$ will \textit{always} refer to the chain from Definition \ref{DefRestGibbs}.

\begin{theorem}[{\citep[][Corollary.~4.5]{anderson2019mixhit2}}]\label{finalcor}
Let $0 < \alpha<\frac{1}{2}$. 
Suppose the target distribution $\pi$ is supported on a compact subset of Euclidean space.  
Then there exist universal constants $d_{\alpha},d'_{\alpha}$ with the following property: for every $\{g(x,1,\cdot)\}_{x\in X}$ of the form \cref{defngibbs} with $\pi$ satisfying  \cref{targetcts}, we have
\[
d'_{\alpha}t_{H}(\alpha)\leq t_{L}\leq d_{\alpha}t_{H}(\alpha).
\]
\end{theorem}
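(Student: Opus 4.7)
The plan is to treat the two inequalities separately. The lower bound $d'_\alpha t_H(\alpha) \leq t_L$ is elementary and will follow from the definition of mixing time by a short iteration. The upper bound $t_L \leq d_\alpha t_H(\alpha)$, which is the substantive direction, I would obtain by transferring the finite-state equivalence of mixing and hitting times for reversible lazy chains \cite{oliveira2012mixing,finitemixhit} to the Gibbs-sampler setting by means of the hyperfinite representation of Markov processes of \cite{Markovpaper}, following the template of the companion paper \cite{anderson2019mixhit2}.

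For the lower bound, fix a Borel set $A$ with $\pi(A) \geq \alpha$ and take $\epsilon = \alpha/4$. Standard submultiplicativity gives $t_L(\epsilon) \leq C \log(1/\epsilon)\, t_L$, and by definition of $t_L(\epsilon)$ we have $g_L(x, t_L(\epsilon), A) \geq \pi(A) - \epsilon \geq 3\alpha/4$ for every $x \in \cX$. Iterating the Markov property in blocks of length $t_L(\epsilon)$, the probability that the lazy chain fails to enter $A$ within $k$ such blocks is at most $(1 - 3\alpha/4)^k$, so $\mathbb{E}_x[\tau_L(A)] \leq C(\alpha)\, t_L$ uniformly in $x$. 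The natural coupling of $g$ and $g_L$ shows $\tau(A) \leq \tau_L(A)$ almost surely, so taking the supremum over $x$ and over $A$ with $\pi(A) \geq \alpha$ yields $t_H(\alpha) \leq C(\alpha)\, t_L$, which gives the lower bound with $d'_\alpha = 1/C(\alpha)$.

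For the upper bound, I would construct an internal hyperfinite Markov chain $g^\ast$ on a fine grid $\cX^\ast \subset {}^\ast [0,1]^d$ whose one-step transitions mimic the coordinate-conditional structure of $g$ and whose internal stationary distribution $\pi^\ast$ has Loeb pushforward equal to $\pi$. Since \cref{targetcts} guarantees a continuous density on a compact set, the regularity needed by the Loeb-measure machinery of \cite{Markovpaper} is automatically available. The internal transfer of the finite-state theorem of \cite{oliveira2012mixing,finitemixhit} then supplies a universal constant $d_\alpha$, depending only on $\alpha$, such that $t_L(g^\ast) \leq d_\alpha\, t_H(g^\ast, \alpha)$. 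Taking standard parts on both sides yields the claimed inequality for $g$.

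The main obstacle is this last ``transfer back'' step, namely showing that both $t_L$ and $t_H(\alpha)$ are stable under the hyperfinite approximation up to infinitesimals. Mixing-time stability is comparatively mild given continuity of $\rho$, because the one-step kernels then push forward continuously under the standard-part map. Hitting-time stability is the delicate part: one must lift a measurable set $A$ with $\pi(A) \geq \alpha$ to an internal set $A^\ast$ with $\pi^\ast(A^\ast)$ still at least $\alpha$, and control the probabilistic ``overshoot'' that occurs when the continuous chain enters $A$ a step before or after its internal shadow. Both ingredients are worked out in \cite{anderson2019mixhit2}, and combining them with the finite-case equivalence and the elementary lower bound yields the claimed two-sided estimate with universal constants $d_\alpha, d'_\alpha$ depending only on $\alpha$.
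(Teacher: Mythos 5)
You should first note that this paper contains no proof of \cref{finalcor} at all: it is imported verbatim as Corollary~4.5 of the companion paper \cite{anderson2019mixhit2}, so there is no internal argument to compare against. Your easy direction is fine and stands on its own: the bound $t_{H}(\alpha)\leq C(\alpha)\,t_{L}$ via submultiplicativity, block iteration, and the time-change coupling $Y_{t}=X_{N_{t}}$ (which indeed gives $\tau(A)\leq\tau_{L}(A)$ pathwise, since $N_{t}\leq t$) is essentially the same elementary ``mixing implies hitting'' computation the paper itself carries out in Section~\ref{SecAppl}, culminating in \cref{IneqMixHitTrivialDirection}, adapted to the lazy chain.

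The substantive direction, $t_{L}\leq d_{\alpha}t_{H}(\alpha)$, is however not proved by your sketch but only routed back to the very result being cited. The steps you flag as the obstacles --- constructing an internal hyperfinite chain whose Loeb pushforward of the stationary distribution is $\pi$, transferring the finite-chain equivalence of \cite{oliveira2012mixing,finitemixhit} (valid for reversible lazy chains, with constants uniform over all finite chains, which is what makes transfer legitimate) to the hyperfinite setting, and above all the push-down showing that $t_{L}$ and $t_{H}(\alpha)$ of the standard Gibbs sampler under \cref{targetcts} agree with the internal quantities up to standard multiplicative constants, uniformly over the whole class in \cref{defngibbs} --- are precisely the content of \cite[Cor.~4.5]{anderson2019mixhit2}. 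Writing ``both ingredients are worked out in \cite{anderson2019mixhit2}'' makes the argument circular as a proof of that corollary: the lifting of a set $A$ with $\pi(A)\geq\alpha$ to an internal set without losing measure, and the control of hitting-time overshoot under the standard-part map, are exactly where the nonstandard machinery of \cite{Markovpaper} has to be deployed in detail, and your outline names them without carrying them out. As a blind reconstruction your plan does match the route described in the paper's ``Previous Work'' discussion, but it should be regarded as a correct statement of strategy rather than a proof.
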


\begin{remark} \label{RemOptConst}
In the remainder of the paper, we will always take $d'_{\alpha}, d_{\alpha}$ to be the \textit{best} constants with this property; it is clear that such optimal constants exist, even if they are not obtained by the particular arguments cited above.

We note that these \textit{best} constants are monotone in $\alpha$, which will be used later.
\end{remark}

%

We now state the following elementary facts about drift conditions. 

\begin{lemma}\label{driftlemma}
Suppose $g$ is a transition kernel satisfying \cref{EqDefDrift} with function $V$ and constants $0\leq\lambda<1$ and $0\leq b<\infty$. Recall the lazy chain $g_{L}(x,1,\cdot)=\frac{1}{2} g(x,1,\cdot)+\frac{1}{2} \delta_{x}(\cdot)$. 
Then we have
\[ \label{ElFact1}
(g_{L}V)(x)=\int V(y) g_{L}(x,1,\dee y) \leq \frac{1+\lambda}{2}V(x) + \frac{b}{2}.
\]

Denoting by $g^{(C)}$ any dominated chain of the form \ref{DefResMH}
or \ref{DefRestGibbs} and $g_{L}^{(C)}$ the corresponding lazy chain, we also have
\[ \label{ElFact2}
(g^{(C)} V)(x) \leq (gV)(x), \qquad (g^{(C)}_{L} V)(x) \leq (g_{L}V)(x)
\]
for any set $C$ of the form $C = \{y \, : \, V(y) \leq r \}$, any $r > \inf_{y} V(y)$ and any $x \in C$.
\end{lemma}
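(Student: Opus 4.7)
The plan is to verify the three inequalities in turn, with the first and third being essentially bookkeeping and the second splitting into cases according to whether $g^{(C)}$ comes from \cref{DefResMH} or \cref{DefRestGibbs}.

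First, \cref{ElFact1} is immediate from the definition $g_L(x,1,\cdot) = \tfrac{1}{2} g(x,1,\cdot) + \tfrac{1}{2}\delta_x(\cdot)$: integrating $V$ against $g_L(x,1,\cdot)$ gives $\tfrac{1}{2}(gV)(x) + \tfrac{1}{2}V(x)$, and inserting the drift bound $(gV)(x) \le \lambda V(x) + b$ yields $\tfrac{1+\lambda}{2}V(x) + \tfrac{b}{2}$. Note that this only uses \cref{EqDefDrift} and makes no reference to $C$.

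For the first half of \cref{ElFact2}, fix $x \in C = \{V \le r\}$. In the Metropolis--Hastings case of \cref{DefResMH}, integrating $V$ against $g^{(C)}(x,1,\cdot)$ gives
\[
(g^{(C)}V)(x) \;=\; \int_{C} V(y)\,g(x,1,\dee y) \;+\; V(x)\,g(x,1,C^{c}),
\]
so the difference $(gV)(x) - (g^{(C)}V)(x) = \int_{C^{c}} (V(y) - V(x))\, g(x,1,\dee y)$ is nonnegative because $V(y) > r \ge V(x)$ for every $y \in C^{c}$. In the Gibbs case of \cref{DefRestGibbs}, it suffices to show that for each coordinate $j$ we have $\int V\,\dee\pi^{(C)}_{x,j} \le \int V\,\dee\pi_{x,j}$, since both $g(x,1,\cdot)$ and $g^{(C)}(x,1,\cdot)$ are averages of these conditionals over $j$. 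When $\pi_{x,j}(C) \in \{0,1\}$ the inequality is trivial; otherwise $\pi_{x,j}$ decomposes as the convex combination
\[
\pi_{x,j} \;=\; \pi_{x,j}(C)\,\pi^{(C)}_{x,j} \;+\; \pi_{x,j}(C^{c})\,\pi^{(C^{c})}_{x,j},
\]
so $\int V\,\dee\pi_{x,j}$ lies between $\int V\,\dee\pi^{(C)}_{x,j} \le r$ and $\int V\,\dee\pi^{(C^{c})}_{x,j} \ge r$, giving the desired bound.

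Finally, \cref{ElFact2}'s second half follows by combining the first: writing $g^{(C)}_{L}(x,1,\cdot) = \tfrac{1}{2}g^{(C)}(x,1,\cdot) + \tfrac{1}{2}\delta_{x}(\cdot)$ and integrating gives $(g^{(C)}_{L}V)(x) = \tfrac{1}{2}(g^{(C)}V)(x) + \tfrac{1}{2}V(x) \le \tfrac{1}{2}(gV)(x) + \tfrac{1}{2}V(x) = (g_L V)(x)$, using the first half of \cref{ElFact2} at the middle step. The only real content, and thus the main (still mild) obstacle, is the Gibbs-case convex-combination argument: everything else is unpacking the definitions of the lazy kernel, the restriction, and the sub-level set $C$.
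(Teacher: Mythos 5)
Your proof is correct and follows essentially the same route as the paper: the computation for \eqref{ElFact1} and the Metropolis--Hastings case of \eqref{ElFact2} (writing the difference of the two kernels applied to $V$ as an integral over $C^{c}$, where $V(y)>r\geq V(x)$) are exactly the paper's argument, after which the paper dismisses the remaining cases as ``similar short calculations.'' Your convex-combination argument for the Gibbs restriction of \cref{DefRestGibbs} is a valid way of filling in one of those omitted cases---it implicitly uses that the coordinate conditional of $\pi^{(C)}$ along $H_{x,j}$ is $\pi_{x,j}$ conditioned on $C=\{V\leq r\}$, which is what the paper's definitions intend---and the lazy-chain inequality is the same one-line combination of \eqref{ElFact1} with the first half of \eqref{ElFact2}.
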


\begin{proof}
To prove \cref{ElFact1}, we have:
\[
(g_{L}V)(x) = \frac{1}{2} \int V(y) g(x,1,\dee y) + \frac{1}{2} V(x) \leq  \frac{1+\lambda}{2}V(x) + \frac{b}{2}.
\]
To prove the first part of \cref{ElFact2} for chains of the form \cref{DefResMH}, we have
\[
(g^{(C)} V)(x) - (gV)(x) = \int_{y \in C} (V(x) - V(y)) g(x,1,dy) \leq g(x,1,C^{c})(V(x) - r) \leq 0.
\]
The remaining cases are similar short calculations.

\end{proof}

We now present the main result in this section:

\begin{lemma}\label{Gibbstechlemma}
There exist a universal set of constants $D = \{d_{\alpha}\}_{0 < \alpha < 0.5}$ with the following property: 
Any Gibbs sampler $g$ that satisfies Assumption \ref{targetcts} and conditions \textbf{(1)}, \textbf{(4)} of Definition \ref{DeftDriftHitComp} for some compact set $C$ of the form $C = \{x \, : \, V(x) \leq r \}$ in fact satisfies all four conditions of Definition \ref{DeftDriftHitComp} with this choice of $D$.
\end{lemma}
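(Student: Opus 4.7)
The plan is to verify conditions \textbf{(2)} and \textbf{(3)} of \cref{DeftDriftHitComp} for the Gibbs restriction $g^{(C)}$ (as defined in \cref{DefRestGibbs}), since conditions \textbf{(1)} and \textbf{(4)} are the stated hypotheses.

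Condition \textbf{(2)} is immediate from \cref{driftlemma}: applying inequality \cref{ElFact2} to the Gibbs restriction gives $(g^{(C)} V)(x) \leq (gV)(x) \leq \lambda V(x) + b$ for every $x \in C$, which is exactly the drift condition for $g^{(C)}$ with the same $V, \lambda, b$. Since $g^{(C)}$ is supported on $C$, no further work is needed.

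Condition \textbf{(3)} is obtained by applying \cref{finalcor} to $g^{(C)}$ itself. Because we use the Gibbs restriction, $g^{(C)}$ is itself a Gibbs sampler of the form \cref{defngibbs} targeting $\pi^{(C)}$, a probability measure supported on the compact set $C \subset [0,1]^d$. Provided $\pi^{(C)}$ satisfies (the appropriate form of) \cref{targetcts}, \cref{finalcor} yields universal constants $\{\tilde{d}_\alpha\}$ with $t_L(g^{(C)}) \leq \tilde{d}_\alpha \, t_H(g^{(C)}, \alpha)$. Setting $D = \{c \tilde{d}_\alpha\}_{0 < \alpha < 0.5}$, where $c$ is a universal constant bounding $t_m$ by $t_L$ for aperiodic reversible chains (Gibbs samplers here being reversible and aperiodic), gives $g^{(C)} \in \mathcal{G}(D)$. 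Crucially, $D$ depends only on the universal constants from \cref{finalcor} and is independent of the particular $g$ or $C$.

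The main obstacle is verifying \cref{targetcts} for $\pi^{(C)}$. The density $\rho^{(C)} = \rho \mathbf{1}_C / \pi(C)$ is generally discontinuous on $\partial C$, and the axis-parallel positivity $\int_{H_{x,j}} \rho^{(C)}(y) \dee y > 0$ must hold for every $x \in C$. For the positivity I would work with $H = B$ (connected components): for each $x \in C$ the segment $B_{x,j}$ is a local axis-parallel segment inside $C$ that inherits positive $\rho$-mass from the hypothesis on the original $\pi$, and the compact form $C = \{V \leq r\}$ guarantees this segment is nondegenerate. The boundary discontinuity is the genuinely delicate point; I would handle it either by (i) inspecting the proof of \cref{finalcor} in \cite{anderson2019mixhit2} to confirm that the continuity hypothesis can be relaxed to continuity on $\mathrm{int}(C)$ (where the Gibbs conditionals actually live), or (ii) approximating $\rho \mathbf{1}_C$ by mollified continuous densities $\rho_n$, applying \cref{finalcor} to the associated Gibbs samplers $g_n^{(C)}$, and passing to a limit using continuity of $t_L$ and $t_H$ under suitable convergence of densities. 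Either route preserves the universality of the constants, which is the key structural feature required for the conclusion.
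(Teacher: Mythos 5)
Your proposal follows essentially the same route as the paper: condition \textbf{(2)} is deduced from \cref{driftlemma} (via \cref{ElFact2}), and condition \textbf{(3)} is exactly \cref{finalcor} applied to the Gibbs restriction $g^{(C)}$, with the universality of $D$ inherited from that theorem. The subtleties you flag are dealt with in the paper by hypothesis rather than argument---the requirement that $\pi^{(C)}$ satisfy \cref{targetcts} is simply imposed in \cref{ThmMainConsGibbs}, and the $t_m$-versus-$t_L$ mismatch is absorbed by working with the lazy chain throughout (cf.\ \cref{ElFact1} and the lazified constants there); your ``aperiodic reversible implies $t_m\le c\,t_L$'' shortcut is not quite right as stated, but it is not needed.
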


\begin{proof}
Condition \textbf{(2)} is immediate from \cref{driftlemma}. Condition \textbf{(3)} is exactly the content of \cref{finalcor}.

\end{proof}

Putting together \cref{Gibbstechlemma} and \cref{tracegeoergodic} immediately gives:

\begin{theorem} \label{ThmMainConsGibbs}
Let $g$ be the transition kernel of a Gibbs sampler with targeting distribution $\pi$. 
Suppose $g$ satisfies \cref{EqDefDrift} and \cref{DefEqHitCond} with $\alpha$ compatibly for some compact $C\in \BorelSets \cX$ with $\pi(C)>0$. Suppose $\pi^{(C)}$ satisfies \cref{targetcts}.
Suppose
\[
C \supset \{x \in \cX \, : \, V(x) \leq r\}
\]
for some $r' > \frac{2b}{1 -\lambda}$,  $r>\frac{2b_{1}+24r'}{1-\lambda_{1}}$ where $\lambda_{1}=\frac{1+\lambda}{2}$ and $b_1=\frac{b}{2}$.
Then there exists $M \, : \, \cX \mapsto [0,\infty)$ and $0 < \rho \leq 1$ such that 
\[
\| g_{L}(x,t,\cdot) - \pi(\cdot) \|_{\mathrm{TV}} \leq M(x) (1 - \rho)^{t}
\]
for all $x \in \cX$ and $t \in \Nats$. Moreover, $M$ and $\rho$ depend only on the parameters $\lambda, b, \alpha, C$.
\end{theorem}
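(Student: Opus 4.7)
The plan is to apply Theorem \ref{tracegeoergodic} not to $g$ but to the lazy chain $g_L$, since the desired conclusion is phrased for $g_L$. The task thus reduces to verifying, for $g_L$, all four conditions of Definition \ref{DeftDriftHitComp}, using the shifted drift parameters $\lambda_1 = (1+\lambda)/2$ and $b_1 = b/2$ together with the universal constant set $D$ furnished by Theorem \ref{finalcor}.

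First, the drift condition \cref{EqDefDrift} for $g_L$ with constants $(\lambda_1, b_1)$ is exactly the content of \cref{ElFact1} in Lemma \ref{driftlemma}. The radius hypothesis $r > \tfrac{2b_1 + 24r'}{1 - \lambda_1}$ then supplies condition (4) of Definition \ref{DeftDriftHitComp}. To verify condition (2), identify $(g_L)^{(C)}$ with the lazy version $(g^{(C)})_L$ of the Gibbs restriction $g^{(C)}$; \cref{ElFact2}, applied to this restricted chain, shows that it inherits the same drift with constants $(\lambda_1, b_1)$. The $C$-hitting condition for $g_L$ (part of condition (1)) follows from the one assumed for $g$, since making a chain lazy at most doubles expected hitting times, so $t_H^{(C)}(g_L, \alpha) \leq 2\, t_H^{(C)}(g,\alpha) < \infty$.

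The substantive input is condition (3): $(g_L)^{(C)} \in \mathcal{G}(D)$. Here I would invoke Theorem \ref{finalcor}. Since $\pi^{(C)}$ satisfies \cref{targetcts} by hypothesis and $g^{(C)}$ is a Gibbs sampler of the form \cref{defngibbs} targeting $\pi^{(C)}$, Theorem \ref{finalcor} supplies a universal $d_\alpha$ such that $t_L(g^{(C)}) \leq d_\alpha\, t_H(g^{(C)}, \alpha)$. Because $t_m((g^{(C)})_L) = t_L(g^{(C)})$, and because the expected hitting times of the lazy chain $(g^{(C)})_L$ dominate those of $g^{(C)}$, we conclude $t_m((g_L)^{(C)}) \leq d_\alpha\, t_H((g_L)^{(C)}, \alpha)$, which is precisely $(g_L)^{(C)} \in \mathcal{G}(D)$.

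With all four conditions of Definition \ref{DeftDriftHitComp} verified for $g_L$ (with parameters $V, \lambda_1, b_1, C, D, \alpha$), a direct application of Theorem \ref{tracegeoergodic} yields the geometric bound on $\|g_L(x,t,\cdot) - \pi(\cdot)\|_{\mathrm{TV}}$, with $M$ and $\rho$ determined by $(\lambda_1, b_1, r, D, \alpha)$ and hence by $(\lambda, b, \alpha, C)$. The main obstacle is really just bookkeeping: keeping straight which of the four related chains $g$, $g_L$, $g^{(C)}$, $(g^{(C)})_L$ satisfies which property, and noting that the Gibbs-specific hitting/mixing equivalence (Theorem \ref{finalcor}) must be run on the unrestricted-but-lazy-free Gibbs sampler $g^{(C)}$, while the drift-and-hit machinery must be run on $g_L$.
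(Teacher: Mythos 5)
Your proposal is correct and follows essentially the same route as the paper, whose proof is simply ``combine Lemma \ref{Gibbstechlemma} (drift for the restricted/lazy chains via Lemma \ref{driftlemma}, condition (3) via Theorem \ref{finalcor}) with Theorem \ref{tracegeoergodic}.'' Your write-up just makes explicit the bookkeeping the paper leaves implicit — that the machinery is run on $g_L$ with constants $\lambda_1,b_1$, that the dominated chain can be taken to be $(g^{(C)})_L$, and that laziness only doubles hitting times so Theorem \ref{finalcor}'s lazy-mixing bound yields $(g^{(C)})_L\in\mathcal{G}(D)$ — which is a faithful (indeed slightly more careful) rendering of the intended argument.
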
 

\subsection{Application to Metropolis--Hastings Chain}
In this section, we give results analogous to \cref{Gibbstechlemma} for the following class of Metropolis--Hastings chain defined in  \citep{anderson2019mixhit2}:

\begin{definition}[Metropolis--Hastings Chain]\label{defMHchain}
Fix a distribution $\pi$ with continuous density function $\rho$ supported on a subset of Euclidean space; without loss of generality we assume that the support of $\pi$ is a subset of $[0,1]^{d}$. We fix a reversible kernel $q$ with unique stationary measure $\nu$ whose support contains that of $\pi$. We assume that $\nu$ has continuous density $\phi$ and that, for all $x$ for which it is defined, $q(x,1,\cdot)$ has Lebesgue density $q_x$. Finally, we assume that the mapping $(x,y)\to q_{x}(y)$ is continuous. We define the \textit{acceptance function} by the formula
\[
\beta(x,y) = \min\left\{1, \frac{\rho(y) q_{y}(x)}{\rho(x) q_{x}(y)}\right\}.
\]
Define $g$ to be the transition kernel given by the formula
\[\label{gtranform}
g(x,1,A) = \int_{y \in A} q_{x}(y) \beta(x,y) \dee y + \delta(x,A) \int_{[0,1]^{d}} q_{x}(y) (1 - \beta(x,y))\dee y.
\]

Note that $g$ is defined uniquely by its \textit{target} $\rho$ and \textit{proposal} $q$.
\end{definition}

\begin{theorem}[{\citep[][Corollary~6.5]{anderson2019mixhit2}}]\label{mixhitmetrothm}
Let $0 < \alpha<\frac{1}{2}$. Then there exist universal constants $d_{\alpha},d'_{\alpha}$ with the following property: for every Metropolis--Hastings chain $g$ of the form \cref{gtranform} satisfying conditions in \cref{defMHchain} such that its target distribution has continuous density function with compact support, we have
\[
d'_{\alpha}t_{H}(\alpha)\leq t_{L}\leq d_{\alpha}t_{H}(\alpha).
\]
\end{theorem}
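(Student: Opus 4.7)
The theorem is cited as Corollary 6.5 of \cite{anderson2019mixhit2}, so in spirit the ``proof'' in this paper is a reference. Nonetheless, let me sketch the approach that would be taken in that companion work; it parallels the strategy used for \cref{finalcor} (the Gibbs-sampler analogue).

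The plan is to reduce to the finite-state equivalence of mixing and hitting times established by \cite{oliveira2012mixing,finitemixhit} via a hyperfinite approximation of the Metropolis--Hastings kernel. The lower bound $d'_{\alpha} t_{H}(\alpha) \le t_{L}$ is the easier direction: if the lazy chain has mixed to within total-variation distance $\tfrac14$ of $\pi$ by time $t_L$, then for any $A \in \BorelSets \cX$ with $\pi(A) \ge \alpha$ one has $g_L(x,t_L,A) \ge \alpha - \tfrac14$ uniformly in $x$, and iterating this in blocks of length $t_L$ yields a geometric tail bound on $\tau(A)$, so $\expect_x[\tau(A)] \le c_\alpha t_L$ for a universal constant $c_\alpha$. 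This kind of argument is standard and does not depend on the Metropolis--Hastings structure.

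The hard direction is $t_L \le d_\alpha t_H(\alpha)$. First I would use the continuity of the proposal density $(x,y) \mapsto q_x(y)$, the continuity of $\rho$ and $\phi$, and the compactness of the support to produce a hyperfinite Markov chain $\comp{g}$ on a hyperfinite state space $\widehat{\cX} \subset \NSE{\cX}$ whose one-step transitions approximate those of $g$ up to an infinitesimal. The construction is the one built in \cite{Markovpaper} and used in \cite{anderson2018mixhit, anderson2019mixhit2}. The key point is that, because the acceptance function $\beta(x,y) = \min\{1, \rho(y)q_y(x)/(\rho(x)q_x(y))\}$ is built from continuous ingredients, the induced hyperfinite acceptance probabilities differ from the continuous ones only infinitesimally, and the ``holding mass'' of the Metropolis--Hastings chain is preserved on the hyperfinite side.

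Once $\comp{g}$ is in hand, I would invoke the Peres--Sousi/Oliveira equivalence of mixing and hitting times for the \emph{finite} (hyperfinite) chain to get $\comp{t}_L \le d_\alpha \comp{t}_H(\alpha)$ with $d_\alpha$ a universal constant coming from the finite-case theorem. The last step is to push this inequality back to the standard chain: one shows that $t_L$ and $t_H(\alpha)$ for $g$ agree with the standard parts of $\comp{t}_L$ and $\comp{t}_H(\alpha)$ for $\comp{g}$, using that total-variation distance and hitting-time expectations are preserved under the hyperfinite--standard correspondence on continuous test sets. The main obstacle is exactly this last transfer: hitting times are lower semicontinuous and can jump in the nonstandard limit, so controlling them requires the continuity and compact-support hypotheses in \cref{defMHchain}, which ensure that the Loeb measure on $\widehat{\cX}$ represents $\pi$ faithfully and that large sets for $\pi$ pull back to large sets for the hyperfinite stationary measure. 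This transfer is precisely the technical content of \cite{anderson2019mixhit2}, and once it is granted the theorem follows by taking $d_\alpha, d'_\alpha$ to be the universal constants from the finite case (with the monotonicity in $\alpha$ noted in \cref{RemOptConst}).
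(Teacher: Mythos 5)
The paper gives no proof of this statement---it is imported directly as Corollary~6.5 of \cite{anderson2019mixhit2}---and your sketch (the easy direction $t_H(\alpha)\lesssim t_L$ by the standard ``run in blocks of one mixing time'' geometric-tail argument, the hard direction $t_L\le d_\alpha t_H(\alpha)$ by hyperfinite representation of the Metropolis--Hastings kernel, invoking the finite-state Oliveira/Peres--Sousi equivalence, and transferring back via the Loeb-measure correspondence) is precisely the route taken in that companion work, so your proposal matches the paper's treatment. The only caveat is that the genuinely technical content (the hyperfinite approximation and the push-back of mixing and hitting quantities to the standard chain) remains delegated to \cite{anderson2019mixhit2} and \cite{Markovpaper}, exactly as the paper itself delegates it.
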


\textit{In the context of the Metropolis--Hastings sampler only}, the $C$-dominated chain $g^{(C)}$ will \textit{always} refer to the chain from Definition \ref{DefResMH}. We note that \cref{driftlemma} remains true with ``Gibbs" replaced by ``Metropolis--Hastings" the one time it is used. As discussed in Remark \ref{RemOptConst}, we always use the \textit{optimal} constants when this theorem is invoked, and these optimal constants are monotone in $\alpha$.

We then have the main result of this section:

\begin{lemma}\label{mixhitmetro}
There exist a universal set of constants $D = \{d_{\alpha}\}_{0 < \alpha < 0.5}$ with the following property:

Any Metropolis--Hastings sampler $g$ of the form \cref{gtranform} satisfying conditions in \cref{defMHchain} and conditions \textbf{(1)}, \textbf{(4)} of Definition \ref{DeftDriftHitComp} for some compact set $C$ of the form $C = \{x \, : \, V(x) \leq r \}$ in fact satisfies all four conditions of Definition \ref{DeftDriftHitComp} with this choice of $D$.
\end{lemma}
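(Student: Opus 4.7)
The plan is to follow the proof of \cref{Gibbstechlemma} almost verbatim, with \cref{finalcor} replaced by \cref{mixhitmetrothm}. Conditions \textbf{(1)} and \textbf{(4)} of \cref{DeftDriftHitComp} are assumed in the hypotheses, so only conditions \textbf{(2)} and \textbf{(3)} need to be verified.

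For condition \textbf{(2)}, I would appeal to \cref{driftlemma}. The text explicitly notes that \cref{driftlemma} ``remains true with `Gibbs' replaced by `Metropolis--Hastings' the one time it is used,'' so the Metropolis--Hastings restriction $g^{(C)}$ of \cref{DefResMH} satisfies $(g^{(C)} V)(x) \leq (gV)(x) \leq \lambda V(x) + b$ for all $x \in C$, which is exactly the drift condition with the same constants as $g$.

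For condition \textbf{(3)}, I would apply \cref{mixhitmetrothm} to $g^{(C)}$. By the remark following \cref{DefResMH}, the restricted kernel $g^{(C)}$ is itself a Metropolis--Hastings sampler with proposal $g$ and target $\pi^{(C)}$, so it has the form required by \cref{gtranform}. The constants $d_\alpha$ can then be taken to be those produced by \cref{mixhitmetrothm}, which are universal. Since these constants are monotone in $\alpha$ (as noted in the sentence after the theorem), a single universal collection $D = \{d_\alpha\}_{0 < \alpha < 0.5}$ works simultaneously for both the Gibbs and Metropolis--Hastings settings; in fact one may take the same collection $D$ as in \cref{Gibbstechlemma} (taking pointwise maxima if needed).

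The main obstacle I expect is purely technical: verifying that $g^{(C)}$ genuinely satisfies the hypotheses of \cref{defMHchain} when we view it as a Metropolis--Hastings chain with target $\pi^{(C)}$. The density of $\pi^{(C)}$ is $\rho \cdot \mathbf{1}_{C} / \pi(C)$, which typically fails to be continuous across $\partial C$, even though $C = \{V \leq r\}$ is compact and $\rho$ is continuous. This is the same subtlety that is implicit in \cref{Gibbstechlemma} and made explicit in \cref{ThmMainConsGibbs} via the added hypothesis that $\pi^{(C)}$ satisfies \cref{targetcts}. In the Metropolis--Hastings write-up I would handle this in the same style: either assume the analogous regularity of $\pi^{(C)}$ when invoking the lemma downstream, or note that the acceptance function $\beta$ for $g^{(C)}$ only involves $\rho$ evaluated on $C$, so the chain's dynamics depend only on the restriction of $\rho$ to $C$, where continuity is preserved. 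Once this regularity bookkeeping is in place, condition \textbf{(3)} is immediate from \cref{mixhitmetrothm} and the lemma follows.
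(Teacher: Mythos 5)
Your proof is correct and matches the paper's own argument: condition (2) via \cref{driftlemma} (with the noted Gibbs-to-Metropolis--Hastings substitution) and condition (3) via \cref{mixhitmetrothm}, with (1) and (4) assumed. The regularity caveat you raise about $\pi^{(C)}$ is likewise left to downstream hypotheses in the paper, exactly as you propose.
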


\begin{proof}
Condition \textbf{(2)} is immediate from \cref{driftlemma}\footnote{As we have just noted, this applies with ``Gibbs'' replaced by ``Metropolis--Hastings''.}. Condition \textbf{(3)} is exactly the content of \cref{mixhitmetrothm}.

\end{proof}

Combining \cref{tracegeoergodic} and \cref{mixhitmetro} tells us that \cref{ThmMainConsGibbs} holds for Metropolis--Hastings samplers as well as Gibbs samplers (we don't rewrite the theorem to save space).

\section{Comparison to Minorization and Illustrative Example} \label{SecAppl}

The main goal of this paper was to illustrate how the minorization condition \cref{IneqDefMin} in \cref{ThmDriftMinClassic} can be replaced by a hitting-time condition (note that, in both cases, a drift condition is still required to conclude that a chain is geometrically ergodic). We have not yet seriously addressed the obvious question: why would anyone wish to do this?

We would like to be able to say that using maximum hitting times gives better bounds. However, the  equivalence results of \cite{oliveira2012mixing,finitemixhit,basu2015characterization, anderson2018mixhit,anderson2019mixhit2} say that this nice response cannot be correct. After all, if mixing times and maximum hitting times are equal up to some universal constants, methods based on maximum hitting times must give bounds that are essentially equivalent to those obtained from the pseudo-minorization condition of \cite{Roberts2001}.

Instead of obtaining \textit{better} results, we claim that bounds on the maximum hitting time often have \textit{easier proofs.} The ``difficulty" of a proof is of course subjective, but we give evidence based on the following concrete claims:

\begin{enumerate}
\item \textbf{Maximum hitting time bounds are \textit{never} ``harder" than pseudo-minorization bounds:} Any pseudo-minorization bound implies a maximum hitting time bound via a straightforward and very short proof (see the argument leading up to \cref{IneqMixHitTrivialDirection}). 
\item \textbf{In some realistic examples, we can compute maximum hitting bounds but not pseudo-minorization bounds:} We have a broad class of simple but realistic examples for which we can compute good bounds on the maximum hitting time, but have no realistic way to directly compute good bounds on the mixing time (see \cref{ThmSimpleApp}). 
\end{enumerate}

Before illustrating the first, we say that a kernel $g$ satisfies the $t$-step pseudo-minorization condition of \cite{Roberts2001} on a set $S$ with constant $\epsilon > 0$ if 
\[
\tvd{g(x,t,\cdot)}{g(y,t,\cdot)} \leq (1-\epsilon)
\]
for all $x,y \in S$. We note that any chain with a mixing time satisfies this pseudo-minorization condition with $t$ equal to the mixing time, $S$ equal to the whole state space and $\epsilon = \frac{1}{4}$. On the other hand, any chain satisfying a pseudo-minorization condition on the whole state has a finite mixing time. 

We now illustrate the first point. Consider \textit{any} Markov chain with transition kernel $g$,  unique stationary measure $\pi$ and mixing time $t_{m}$. Then for any measurable $A$ with $\pi(A) \geq \frac{1}{3}$ and starting point $x$,
\[
g(x,t_{m},A) \geq \pi(A) - \| g(x,t_{m},\cdot) - \pi(\cdot) \|_{\mathrm{TV}} \geq \frac{1}{12}.
\]
This implies, for all starting points $x$, measurable sets $A$ with $\pi(A) \geq \frac{1}{3}$, and integers $k \in \mathbb{N}$,
\[
\mathbb{P}_{x}\left[\tau(A) > k t_{m} \, | \, \tau(A) > (k-1) t_{m}\right] \leq \frac{11}{12}.
\] 
Using the integration-by-parts formula, this implies 
\[
\mathbb{E}_{x}[\tau(A)] \leq t_{m} \, \sum_{k=0}^{\infty} \mathbb{P}_{x}[\tau(A) > k t_{m}] \leq t_{m} \sum_{k=0}^{\infty} \left(1-\frac{1}{12}\right)^{k} = 12 t_{m},
\]
and so
\[ \label{IneqMixHitTrivialDirection}
t_{H}\left(\frac{1}{3} \right) \leq 12 t_{m}.
\]
This completes our short, elementary and self-contained argument that mixing conditions imply hitting conditions.

Next, we consider a class of examples with state space $[0,1]$. We begin by defining the following class of nearly-unimodal measures. These are meant to mimic the typical ``small set" used in a pseudo-minorization argument: we expect the target distribution to be reasonably close to unimodal on the small set, but in practice we often do not have detailed control over the size of the small set or the fluctuations of the target distribution over the small set.

\begin{definition}
Fix $0 < \alpha < 0.5$ and $1 \leq r < \infty$. For a distribution or density $\rho$ and constant $0< q< 1$, denote by $m(\rho,q)$ the $q$'th quantile of $\rho$. We say that a density $\rho$ on $[0,1]$ is \textit{unimodal} if there exists some $m\in [0,1]$ such that
\[
\rho(x) < \rho(y)
\]
for all $0 \leq x < y \leq m$ and 
\[
\rho(x) > \rho(y)
\]
for all $m \leq x < y \leq 1$. We say that a density $\psi$ is $(\alpha,r,\rho)$-nearly unimodal if
\begin{enumerate}
\item $\rho$ is unimodal, \textit{and}
\item $r^{-1} \leq \frac{\rho(x)}{\psi(x)} \leq r$ for all $x \in [0,1]$, \textit{and}
\item $r^{-1} \leq \frac{\psi(x)}{\psi(y)} \leq r$ for all $x,y \in [m(\psi,\alpha), m(\psi,1-\alpha)]$.
\end{enumerate}
\end{definition}

We have the following:

\begin{theorem} \label{ThmSimpleApp}
Fix $1 \leq r < \infty$, $0 < \alpha < 0.5$. There exists a universal constant $L = L(\alpha,r)$ so that the following holds:

Let $\psi$ be a $C^{1}$ density that is $(\alpha,r,\rho)$-nearly unimodal for some $\rho$. For $c^{-1} \in \{1,2,\ldots\}$, let $g_{c}$ be the transition kernel given in  \cref{defMHchain} with proposal kernel $q_{c}(x,1,A) = \frac{|A \cap[x-c,x+c]|}{2c}$ and target $\psi$. Then the maximum hitting time $t_{H}(g_{c},\alpha)$ of $g_{c}$ satisfies 
\[
t_{H}(g_{c},\alpha) \leq L c^{-2}
\]
for all $c^{-1} > C(\psi,\alpha)$ sufficiently large.
\end{theorem}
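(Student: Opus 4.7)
The plan is to reduce the claim to a lazy-mixing-time bound via the hitting--mixing equivalence and then to establish that bound using a one-dimensional conductance argument that exploits the near-unimodal structure. Since $g_c$ is a Metropolis--Hastings chain of the form \cref{gtranform} with continuous compactly-supported target $\psi$ and symmetric proposal $q_c$ (whose density $q_x(y) = (2c)^{-1}\mathbf{1}_{|x-y|\le c}$ is continuous off the diagonal), $g_c$ satisfies the hypotheses of \cref{mixhitmetrothm}, and so $t_H(g_c,\alpha) \le (d'_\alpha)^{-1} t_L(g_c)$. It therefore suffices to prove $t_L(g_c) \le L'(\alpha,r)\,c^{-2}$ and set $L = L'(\alpha,r)/d'_\alpha$.

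To bound $t_L(g_c)$, I would use Cheeger's inequality $\gamma \ge \Phi^{2}/2$ and establish a conductance lower bound $\Phi \ge C_1(\alpha,r)\,c$. Let $B = [m(\psi,\alpha),\,m(\psi,1-\alpha)]$ denote the near-unimodal bulk, so that $\pi(B) = 1-2\alpha$ and, by hypothesis, $\psi(x)/\psi(y)\le r$ for all $x,y\in B$. For any measurable $S$ with $\pi(S)\in(0,1/2]$, a one-dimensional cut-point argument produces some $z\in B$ such that both $S\cap[z-c,z]$ and $S^{c}\cap[z,z+c]$ carry $\pi$-mass of order $c$; the condition $c^{-1} > C(\psi,\alpha)$ is used here to guarantee that the $c$-neighborhood of $z$ fits inside $B$ and that $\psi$ varies by at most a constant factor across $[z-c,z+c]$. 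Bounding the flux $Q(S,S^{c})$ from below using (i) the explicit proposal density $q_x(y)$, (ii) the acceptance bound $\beta(x,y)\ge 1/r$ on $B$, and (iii) a uniform-in-$\psi$ lower bound for $\psi$ on $[z-c,z+c]$ coming from the bounded ratio on $B$, gives $Q(S,S^{c}) \ge C_1(\alpha,r)\,c\,\pi(S)\pi(S^{c})$, and hence the desired conductance bound and spectral gap $\gamma(g_c)\ge C_2(\alpha,r)\,c^{2}$.

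The main technical obstacle is that $S$ or $S^{c}$ may be concentrated in the tails $[0,m(\psi,\alpha)]\cup[m(\psi,1-\alpha),1]$, on which we have no \emph{a priori} density control from near-unimodality. To handle this I would combine the conductance estimate above with a preliminary drift step: using $\psi \in [r^{-1}\rho,\,r\rho]$ and the unimodality of $\rho$, I would construct a Lyapunov function $V$ supported on $[0,1]$ and centered at the mode of $\rho$, and show that for $x$ outside $B$, proposals toward $B$ are accepted with probability at least $1/r$, which yields a drift inequality $\EE[V(X_{t+1})\mid X_t=x] \le V(x) - C_3(\alpha,r)\,c^{2}$. This ensures the chain enters $B$ in $O(c^{-2})$ steps in expectation from any starting point, after which the spectral gap bound on the restricted dynamics $g_c^{(B)}$ takes over. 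Combining the two phases (drift into $B$, then mixing inside $B$) yields $t_L(g_c) \le L'(\alpha,r)\,c^{-2}$ with constants depending only on $\alpha$ and $r$. The boundary effect at $\{0,1\}$ is mild: restricting the proposal to $[x-c,x+c]\cap[0,1]$ only increases the holding probability by an additive $O(c)$ near the boundary, and this does not affect either the drift inequality or the cut-point flux estimate at leading order.
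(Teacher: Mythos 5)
Your reduction runs in the opposite direction from the paper and hits real obstacles. The paper never bounds the mixing time of $g_{c}$ at all: it discretizes to a birth-and-death chain $h_{c}$, bounds $t_{H}(h_{c},\alpha')$ by $O(c^{-2})$ using the explicit hitting-time formulas of \cite{palacios1996note} compared term-by-term with the lazy simple random walk, and then transfers the bound to $g_{c}$ by a weak-convergence (Skorohod) argument (Lemmas \ref{LemmaSkorohodArg} and \ref{NewSkoroLemma}). Your plan instead goes hitting $\leq$ mixing $\leq$ Cheeger. The first inequality is fine (it is the easy direction, cf.\ \cref{IneqMixHitTrivialDirection}), but the Cheeger step cannot deliver $t_{L}(g_{c}) \leq L'(\alpha,r)\,c^{-2}$: a conductance bound $\Phi \gtrsim c$ gives a spectral gap $\gamma \gtrsim c^{2}$, and converting a spectral gap into a \emph{worst-case-start} total variation bound requires a warm-start/$L^{2}$ factor of order $\log\bigl(1/(c\,\inf\psi)\bigr)$, since from a point mass the one-step density with respect to $\pi$ is of size $1/(2c\,\psi_{\min})$ and $\psi$ is not bounded below in the tails (near-unimodality only pins $\psi$ to within a factor $r$ of a unimodal $\rho$ that may vanish at the endpoints). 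This yields at best $O(c^{-2}\log(1/c))$ with a constant that also depends on $\psi$, not the claimed universal $L(\alpha,r)\,c^{-2}$; indeed the paper's whole point with this example (see the discussion after \cref{ThmSimpleApp}, citing \cite{johndrow2018fast,yuen2000applications}) is that direct mixing-time analyses of this class do not achieve the clean $c^{-2}$ rate, which is why the hitting time is bounded directly.

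There are two further gaps in the tail phase. First, the drift inequality you assert for $x \notin B$ is unjustified: near-unimodality permits $\psi$ to oscillate by a factor up to $r^{2}$ at scale $c$ inside the envelope of $\rho$, so locally the acceptance probability toward the mode is only $\geq r^{-2}$ while the acceptance probability away from the mode can equal $1$; for a mode-centered Lyapunov function the one-step expected change can then be positive, so no pointwise bound of the form $\mathbb{E}[V(X_{t+1})\mid X_{t}=x] \leq V(x) - C_{3}c^{2}$ follows. (The chain in the tail is a walk in a rough potential; it is still diffusive, but proving the $O_{r}(c^{-2})$ hitting estimate requires exactly the birth-and-death/electrical comparison the paper uses, not a naive drift.) Second, even granting both phases, ``drift into $B$ in $O(c^{-2})$ steps, then the restricted chain $g_{c}^{(B)}$ has gap $\Omega(c^{2})$'' does not by itself bound $t_{L}(g_{c})$: the chain can exit $B$ repeatedly, and stitching the phases into a worst-case TV bound needs a regeneration or drift-and-minorization argument of precisely the kind the paper is developing, which you have not supplied. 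The cut-point conductance claim also silently assumes both $S$ and $S^{c}$ put mass of order $c$ near a common point of $B$, which fails when $S$ sits entirely in one tail (mass up to $\alpha$), so that case would need its own flux estimate. In short, the architecture (hitting via mixing via conductance plus a pointwise drift in the tail) does not close; the paper's route through explicit birth-and-death hitting formulas and a diffusion limit avoids all of these issues.
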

 
\begin{proof}

The basic strategy is to define a simple sequence of discretizations of the process, bound their maximal hitting times, then use weak convergence results to translate this bound back to our original kernel $g_{c}$. We take this slightly indirect route in order to take advantage of some particularly-simple formulas for expected hitting times that are available in the discrete setting.

For $c^{-1} \in \mathbb{N}$, we define the transition kernel of a closely-related ``birth-and-death" chain $h_{c}$ on $[[c]] \equiv \{0,c,2c,\ldots,1-2c,1-c\}$ by the formula
\begin{align*}
h_{c}(x,1,x+1) &= \frac{1}{2} \min \left\{1, \frac{\psi(x+c)}{\psi(x)} \right\}, \qquad x < 1 \\
h_{c}(x,1,x-1) &= \frac{1}{2} \min \left\{1, \frac{\psi(x-c)}{\psi(x)} \right\}, \qquad x >0 \\
\end{align*}
and $h_{c}(x,1,x) = 1 - h_{c}(x,1,x-1) - h_{c}(x,1,x+1)$. Since $h_{c}$ is a birth-and-death chain, it is reversible with respect to some probability distribution function $\psi_{c}$. 

Next, fix a measurable set $S$ with $\int_{x \in S} \psi(x) dx > \alpha$. By the (uniform) continuity of $\psi$, it is straightforward to check that $\sum_{x \in S \cap [[c]]} \psi_{c}(x) >  \frac{9}{10} \alpha$ for all $c > C(\psi, \alpha)$ sufficiently large. Since $\int_{x \in S} \psi(x) dx > \alpha$ and the entire state space of the Markov chain is the interval $[0,1]$, for $0 < \alpha' < \frac{1}{2} \alpha$ there exists a point $p(c,\alpha') \in \{ c \lfloor c^{-1} m(\psi,1-\alpha') \rfloor, \ldots, c \lceil c^{-1} m(\psi,1-\alpha') \rceil \}$ in the ``middle" of the interval $[[c]]$ around which $S$ is fairly dense:
\[ \label{IneqHighDensityPoint}
\int_{x \in S \cap [p(c,\alpha'), p(c,\alpha') + c]} 1 dx > (\alpha - 2 \alpha') c >0.
\]
For convenience, define $m'(\psi,\alpha') =  c \lfloor c^{-1} m(\psi,1-\alpha') \rfloor$ and $m'(\psi,1-\alpha') = c \lceil c^{-1} m(\psi,1-\alpha') \rceil$. Note that since $\alpha' < 0.5$ and $(1-\alpha') > 0.5$, there is no ambiguity in the notation for $m'$.

Denote by $\tau(h,A)$ the hitting time of a measurable set $A$ for a transition kernel $h$, extending the notion in Equation \cref{EqBasicHitDef}. Since birth-and-death chains cannot ``skip" any values,
\begin{equation}
\label{IneqNoSkip}
\begin{split}
&\max_{x \in [[c]]} \mathbb{E}_{x}[\tau(h_{c},\{p(c,\alpha')\})]\\
&\leq \max (\mathbb{E}_{0}[\tau(h_{c},\{  m(\psi_{c}, 1-\alpha')\})],\mathbb{E}_{1-c}[\tau(h_{c},\{  m(\psi_{c},\alpha')\})]) = t_{H}(h_{c},\alpha').
\end{split}
\end{equation}

Crucially, we have bounded the left-hand side by expressions that do not depend on $S$.

The expectations appearing in the middle expression of \cref{IneqNoSkip} have well-known (though slightly messy) explicit formulas. We now inspect the formula appearing in  \cite[Theorem 2.3]{palacios1996note}. Comparing the explicit formula for $h_{c}$ to the explicit formula for the $\frac{1}{2}$-lazy simple random walk $w_{c}$ on the interval $\{0,c,2c,\ldots,1-2c,1-c\}$, we see that every term in the explicit formula differs between these two walks by at most a multiplicative factor depending only on $\alpha, r$. Thus, there exists a universal constant $L_{1} = L_{1}(\alpha',r)$ such that   
\[ \label{HWRelation}
 t_{H}(h_{c},\alpha') \leq L_{1}  t_{H}(w_{c},\alpha'). 
\]
Since $t_{H}(w_{c},\alpha') \leq c^{-2}$ for all $\alpha' > 0$ (see \textit{e.g.}  \cite[Example 10.20]{markovmix}), \cref{IneqNoSkip} and \cref{HWRelation} together give
\[ \label{IneqHittingBDBD}
\max_{x \in [[c]]} \mathbb{E}_{x}[\tau(h_{c}, p(c,\alpha')\})] \leq L_{1} c^{-2}.
\]
Note that the bound on the right-hand side is uniform over the choice of set $S$.

By \cref{IneqHighDensityPoint}, there exists a universal constant $L_{2} = L_{2}(\alpha',r)$ such that
\[ \label{IneqGcInterval}
\sup_{x} \mathbb{E}_{x}[\tau(g_{c},S)] \leq L_{2} \, \sup_{x} \max(\mathbb{E}_{x}[\tau(g_{c},[0, m'(\psi,\alpha')])],\mathbb{E}_{x}[\tau(g_{c},[m'(\psi,1-\alpha'),1])] ].
\]

We now relate this to $h_{c}$.  Fix some $B > 0$ and consider sample paths $\{ X_{t}^{(c)}\}_{t=0}^{B c^{-2}} \sim g_{c}$, $\{Y_{t}^{(c)}\}_{t=0}^{B c^{-2}} \sim h_{c}$. After rescaling time by a constant factor, the stochastic processes  $\{ X_{c^{-2}t}^{(c)}\}_{t=0}^{B}, \, \{Y_{c^{-2}t}^{(c)}\}_{t=0}^{B}$ converge to the same limiting continuous-time process in Skorohod's topology on $\mathcal{D}[0,B]$ as $c \rightarrow 0$; the proof of this fact is deferred to the appendix, where it is described as Lemma \ref{LemmaSkorohodArg}.

 Since $g_{c}$ cannot jump more than distance $c$, this convergence (together with Lemma \ref{NewSkoroLemma}) implies  that for all $\epsilon > 0$ there is a constant $L_{3} = L_{3}(\epsilon,\alpha,\alpha',r)$ such that\footnote{We note that Lemma \ref{NewSkoroLemma} relates hitting times of two slightly different sets, and so at first glance this might cause a problem. However, there is no problem: our bounds apply to \textit{all} choices of $\alpha'$ in an open interval, and are monotone in the choice of $\alpha'$ within that interval. Thus, Lemma \ref{NewSkoroLemma} lets us compare hitting times at the cost of an arbitrarily-small change of our choice of $\alpha'$ within that interval and changing the implied constant.}
\[
\sup_{x} \mathbb{E}_{x}[\tau(g_{c},I)] \leq L_{3} \left(\max_{x \in [[c]]} \max_{I \in \mathcal{I}(\alpha)}\mathbb{E}_{x}[\tau(h_{c},I)] + \epsilon c^{-2}\right),
\]
where $\mathcal{I}(\alpha') =\{ [0, m'(\psi,\alpha')], [m'(\psi,1-\alpha'),1]\}$. Combining this with Inequalities \cref{IneqHittingBDBD} and \cref{IneqGcInterval}, we have

\[
\sup_{x} \mathbb{E}_{x}[\tau(g_{c},S)] \leq L_{2} L_{3} \, \left(\max_{x \in [[c]]} \max_{I \in \mathcal{I}(\alpha)}\mathbb{E}_{x}[\tau(h_{c},I)] + \epsilon c^{-2}\right) = O( c^{-2}).
\]
Since these bounds all hold uniformly over the choice of measurable $S$ with stationary measure greater than $\alpha$, we can fix any $\alpha' \in (0, \frac{1}{2} \alpha) \neq \emptyset$ and this bound completes the proof.
\end{proof}

When $\psi(x) \equiv 1$, the kernel analyzed above is exactly the usual ``ball" walk on $[0,1]$, so this upper bound is in fact sharp. While our analysis is fairly simple, we know of no simple way to get comparable results by directly analyzing the mixing time. See further discussion of this question in \textit{e.g.} \cite{johndrow2018fast,yuen2000applications}, where very similar classes of Markov chains are studied using other methods; as shown in those articles, these other methods cannot give bounds better than $t_{m}(g_{c}) = O(c^{-3})$ for any walk in this class.

As in those other articles, we are often interested in Markov chains that are supported on all of $\mathbb{R}$ rather than merely a compact set. These chains will typically not have finite expected hitting times, and Theorem \ref{ThmSimpleApp} will not yield any nontrivial bounds. In this setting, Theorem \ref{ThmSimpleApp} can be combined with our main result, Theorem \ref{tracegeoergodic}, to obtain useful convergence estimates.
\printbibliography

\appendix 

\section{Short Results on Convergence of Hitting Times and Skorohod Topology}

We consider two ergodic Markov processes $\{X_{t}^{(n)}\}_{t=0}^{B}, \{Y_{t}^{(n)}\}_{t =0}^{B}$ on $[0,1]$ that converge in distribution to a common process $\{Z_{t}\}_{t=0}^{B}$ on $[0,1]$ in the Skorohod topology $\mathcal{D}[0,B]$. 

We assume that these processes all have stationary measures. Furthermore, we assume that for every measurable $A \subset [0,1]$ with strictly positive Lebesgue measure, these stationary measures assign strictly positive probability to $A$ (for $n > N(A)$ sufficiently large).

For $W \in \{X^{(n)}, Y^{(n)}, Z\}$ and $p \in [0,1]$, define 
\[
\tau(W,p) = \inf \{t \geq 0 \, : \, W_{t} \geq p\}.
\] 

The first result we need is:

\begin{lemma} \label{NewSkoroLemma}
For all $\epsilon > 0$ and $q < p \in [0,1]$,
\[
\lim_{n \rightarrow \infty} \mathbb{P}_{q}[\tau(X^{(n)}, p) >  \tau(Y^{(n)},p+\epsilon) + \epsilon] = 0.
\]

\end{lemma}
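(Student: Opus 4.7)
The plan is to lift the weak convergence to almost-sure convergence via Skorohod representation, and then exploit the time-change characterization of the Skorohod topology on $\mathcal{D}[0,B]$ to transfer a hitting-time bound from $Y^{(n)}$ to $X^{(n)}$. The key observation is that, once $X^{(n)}$ and $Y^{(n)}$ are close (in Skorohod) to the \emph{same} limit $Z$, they must also be close to each other after a small reparameterization of time; since we compare the two levels $p$ and $p+\epsilon$ separated by a strictly positive gap and allow a time slack of $\epsilon$, uniform closeness of paths is enough to relate their hitting times, sidestepping the usual discontinuity of the hitting-time functional under Skorohod convergence.

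Concretely, I would first appeal to Skorohod representation on the joint sequence $(X^{(n)},Y^{(n)})$ so that, on a common probability space, $X^{(n)}\to Z$ and $Y^{(n)}\to Z$ almost surely in $\mathcal{D}[0,B]$. This produces continuous strictly-increasing bijections $\lambda^{(n)}_X,\lambda^{(n)}_Y:[0,B]\to[0,B]$ fixing the endpoints, with $\|\lambda^{(n)}_X-\mathrm{id}\|_\infty$, $\|\lambda^{(n)}_Y-\mathrm{id}\|_\infty$, $\|X^{(n)}\circ\lambda^{(n)}_X-Z\|_\infty$, and $\|Y^{(n)}\circ\lambda^{(n)}_Y-Z\|_\infty$ all tending to $0$ almost surely. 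Setting $\mu^{(n)}:=\lambda^{(n)}_X\circ(\lambda^{(n)}_Y)^{-1}$, standard manipulations give $\|\mu^{(n)}-\mathrm{id}\|_\infty\to 0$ and $\|X^{(n)}\circ\mu^{(n)}-Y^{(n)}\|_\infty\to 0$ almost surely.

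Next, fix any $\delta\in(0,\epsilon)$ and work on the a.s.~event that both of the quantities above lie below $\delta$ for all $n$ large enough. Write $\tau_Y^{(n)}:=\tau(Y^{(n)},p+\epsilon)$. On $\{\tau_Y^{(n)}\leq B\}$, right-continuity of the paths forces $Y^{(n)}_{\tau_Y^{(n)}}\geq p+\epsilon$, hence $X^{(n)}_{\mu^{(n)}(\tau_Y^{(n)})}\geq p+\epsilon-\delta>p$, and therefore $\tau(X^{(n)},p)\leq\mu^{(n)}(\tau_Y^{(n)})\leq\tau_Y^{(n)}+\delta<\tau_Y^{(n)}+\epsilon$; on $\{\tau_Y^{(n)}=\infty\}$ the inequality $\tau(X^{(n)},p)>\tau_Y^{(n)}+\epsilon$ is vacuously false. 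Either way, the event in the lemma fails for all large $n$ almost surely, so its probability tends to $0$.

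The main obstacle worth flagging is the joint Skorohod representation step: marginal weak convergence of each of $X^{(n)}$ and $Y^{(n)}$ to the same $Z$ does \emph{not} automatically imply joint convergence of $(X^{(n)},Y^{(n)})$ to the diagonal $(Z,Z)$, and without such joint convergence the lemma can fail for some couplings (e.g.\ independent copies). The cleanest reading is that the underlying construction that produced these two sequences supplies such a joint coupling; since the only place the lemma is invoked (the proof of \cref{ThmSimpleApp}) uses it to control expected hitting times of a single chain at a time, one is free to install whatever joint coupling is convenient. Once that point is granted, the time-change plus level-separation argument above is essentially automatic.
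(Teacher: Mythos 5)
Your argument is correct and is essentially the paper's own proof made explicit: the paper's key step is the pathwise dichotomy that either $\tau(U,p)\le \epsilon+\tau(V,p+\epsilon)$ or $D_{B}(U,V)>\epsilon$, which is exactly the time-change computation you spell out, and like you it chains through the common limit $Z$. The coupling caveat you flag is genuine but applies equally to the paper's proof, which also tacitly works on a joint realization in which the Skorohod distances to $Z$ vanish in probability; since the use of the lemma in \cref{ThmSimpleApp} only draws marginal (distributional) consequences, installing such a coupling is harmless, as you note.
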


\begin{proof}

Denote by $D_{B}$ the Skorohod distance on paths on $[0,B]$. We note that, for all $\epsilon > 0$ and all pairs of paths $\{U_{t}, V_{t}\}_{t=0}^{B}$, the event
\[
\{ \tau(U,p) \leq \epsilon + \tau(V,p+\epsilon)\} \cup \{D_{B}(\{U_{t}\}_{t=0}^{B}, \{V_{t}\}_{t=0}^{B}) > \epsilon\}
\]
holds. Note that this holds pathwise, and is not probabilistic.

This immediately implies that, for all $\epsilon > 0$ and $p \in [0,1]$,
\[
\lim_{n \rightarrow \infty} \mathbb{P}[\tau(X^{(n)}, p) > \epsilon + \tau(Z,p+\epsilon)] = 0.
\]
Applying the same argument again to compare $Y^{(n)}, Z$ (and rescaling $\epsilon$) gives the desired conclusion
\[
\lim_{n \rightarrow \infty} \mathbb{P}[\tau(X^{(n)}, p) > \epsilon + \tau(Y^{(n)},p+\epsilon)] = 0.
\]

\end{proof}

We note that by \textit{e.g.} switching signs and/or the role of $X,Y$ in the theorem, we get analogous results for other hitting times such as 
\[
\tau'(W,p) = \inf \{t \geq 0 \, : \, W_{t} \leq p\}.
\]

The next result we need is:

\begin{lemma} \label{LemmaSkorohodArg}
Fix $0 < B < \infty$ and let $\{ X_{c^{-2}t}^{(c)}\}_{t=0}^{B}, \, \{Y_{c^{-2}t}^{(c)}\}_{t=0}^{B}$ be stochastic processes sampled from the generators $g_{c}, h_{c}$ appearing in the proof of Theorem \ref{ThmSimpleApp}. 

Then there exist constants $C_{X}, C_{Y}$ and a process $\{Z_{t}\}_{t=0}^{B}$ so that $\{ X_{C_{X} c^{-2}t}^{(c)}\}_{t=0}^{B}, \, \{Y_{C_{Y} c^{-2}t}^{(c)}\}_{t=0}^{B}$ both converge to  $\{Z_{t}\}_{t=0}^{B}$ in Skorohod's topology on $\mathcal{D}[0,B]$ as $c \rightarrow 0$.
\end{lemma}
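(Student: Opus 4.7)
The plan is to identify the limiting process $\{Z_t\}$ as a Langevin-type diffusion on $[0,1]$ with infinitesimal generator $\mathcal{L}f(x) = \tfrac{1}{2} f''(x) + \tfrac{1}{2}(\psi'(x)/\psi(x)) f'(x)$, reflected at $\{0,1\}$, and then to verify, for each of the two time-rescaled discrete chains, the hypotheses of a standard Markov-chain invariance principle (a Stroock--Varadhan / Ethier--Kurtz diffusion approximation). The common limit is the unique reversible diffusion associated with $\psi$ on $[0,1]$ with Neumann boundary conditions; uniqueness comes from well-posedness of the martingale problem for the elliptic operator $\mathcal{L}$ with reflecting boundary.

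First I would compute the local mean and variance of one step of each chain using the Taylor expansion $\min(1,\psi(y)/\psi(x)) = 1 \wedge (1 + (y-x)\psi'(x)/\psi(x) + O((y-x)^2))$; this is where the $C^1$ hypothesis on $\psi$ enters. For $h_c$ at an interior grid point $x$, a short calculation gives $\mathbb{E}_x[\Delta_h] = (c^2/2)(\psi'(x)/\psi(x)) + O(c^3)$ and $\mathbb{E}_x[\Delta_h^2] = c^2 + O(c^3)$. For $g_c$, splitting the proposal integral over $[-c,c]$ into the region where the acceptance ratio equals $1$ and the region where it linearizes as $1 + \Delta\,\psi'(x)/\psi(x)$, one obtains $\mathbb{E}_x[\Delta_g] = (c^2/6)(\psi'(x)/\psi(x)) + O(c^3)$ and $\mathbb{E}_x[\Delta_g^2] = c^2/3 + O(c^3)$. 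All third absolute moments are of order $c^3$, and these expansions are uniform in $x$ on compact subsets of $(0,1)$. Setting $C_Y = 1$ and $C_X = 3$ therefore aligns both rescaled generators with $\mathcal{L}$: the transition operators satisfy $c^{-2}(T_{c,h}f - f) \to \mathcal{L}f$ and $(3c^{-2})^{-1}(T_{c,g}f - f) \cdot 3c^{-2}\to \mathcal{L}f$ uniformly on $C^2$ functions with $f'(0)=f'(1)=0$.

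Once uniform generator convergence on a core is established, Skorohod convergence on $\mathcal{D}[0,B]$ follows by standard theory: tightness is automatic because jumps are uniformly bounded by $c$, so producing an oscillation of size $\eta$ within a time window of width $\delta$ requires at least $\eta/c$ jumps, which forces a vanishing modulus of continuity in the Skorohod sense as $c \to 0$; finite-dimensional convergence follows from the solution of the martingale problem for $\mathcal{L}$. Since both sequences converge to the solution of the same well-posed martingale problem, they converge to the same process $\{Z_t\}$.

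The main obstacle will be the correct treatment of the boundary, since the two chains have different reflection mechanisms: $h_c$ reflects via the birth-and-death restriction at $\{0,1-c\}$, while $g_c$ reflects via truncating the uniform proposal at $\{0,1\}$. I would handle this by verifying that in both cases the boundary accrues only negligible local time per unit of rescaled time, recovering instantaneous Neumann reflection in the limit. Alternatively, since the downstream application in Lemma \ref{NewSkoroLemma} and in the proof of Theorem \ref{ThmSimpleApp} only compares hitting times of sets strictly inside $(0,1)$, one can localize by a stopping time at distance $\eta$ from the boundary and work entirely on the interior, bypassing boundary analysis at the cost of an arbitrarily small $\eta$.
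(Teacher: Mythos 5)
Your argument is correct in outline, but it follows a genuinely different route from the paper. The paper proves the lemma by adapting the discrete-approximation-to-reflected-Brownian-motion machinery of \cite{burdzy2008discrete} (modifying their Lemma 2.2 to accommodate a drift term), together with an elementary alternative sketch that concatenates Donsker-type estimates on interior time windows, passes to trace processes on $[\gamma,1-\gamma]$, shows the occupation time of the boundary layers vanishes, and only then identifies the limit as reflected Brownian motion with drift. You instead run a standard Stroock--Varadhan/Ethier--Kurtz diffusion approximation: explicit one-step drift and variance expansions (your computations $\mathbb{E}_x[\Delta_h]\approx \tfrac{c^2}{2}\,\psi'/\psi$, $\mathbb{E}_x[\Delta_h^2]\approx c^2$, $\mathbb{E}_x[\Delta_g]\approx \tfrac{c^2}{6}\,\psi'/\psi$, $\mathbb{E}_x[\Delta_g^2]\approx \tfrac{c^2}{3}$ are right and correctly pin down $C_Y=1$, $C_X=3$), generator convergence on a core, tightness from the $O(c)$ jump bound, and identification of the common limit through well-posedness of the martingale problem for the reflected Langevin generator $\mathcal{L}f=\tfrac12 f''+\tfrac12(\psi'/\psi)f'$. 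What your route buys: explicit constants $C_X,C_Y$, the correct identification of the limiting drift as $\tfrac12(\log\psi)'$ (the paper's sketch loosely writes ``drift $\psi'(x)$''), and independence from the multidimensional reflected-walk framework of \cite{burdzy2008discrete}; what the paper's route buys is that the delicate boundary analysis is largely outsourced to, or mimicked from, an existing result rather than re-proved. Two points to tighten if you carry this out: with $\psi$ only $C^1$ the expansion errors are $o(c^2)$ uniformly on compact subsets of $(0,1)$, not $O(c^3)$ (this still suffices for generator convergence, but note that near-unimodality does not force $\psi$ to be bounded away from $0$ at the endpoints, so the drift coefficient may be unbounded near $\{0,1\}$ and the Neumann/boundary step genuinely requires your localization or negligible-occupation argument --- exactly the issue the paper treats via traces and excursion bounds); and your displayed scaling relation for $g_c$ should read $3c^{-2}(T_{c,g}f-f)\to\mathcal{L}f$, since as written it simplifies to $T_{c,g}f-f\to\mathcal{L}f$, which is a typo rather than a substantive error.
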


\begin{proof}

Although we expect that this fact is well-known, we were not able to find a precise reference in the literature. Since weak convergence arguments can be rather long and in this case there are not interesting technical details, we instead give a short sketch.

In \cite{burdzy2008discrete}, the authors provide a general technique for proving weak convergence of certain discrete-time random walks to a reflected Brownian motion. Our walks will converge to a reflected Brownian motion \textit{with drift} and so are not directly covered by the theorems as stated. However, the proof of \cite[Thm 2.4]{burdzy2008discrete} can be modified to prove our assertion with the following changes to the lemmas used: 

\begin{enumerate}
\item \cite[Lemma 2.1]{burdzy2008discrete}: no substantial changes required - the sequence of walks is clearly tight.
\item \cite[Lemma 2.2]{burdzy2008discrete}: recall that the density of usual Brownian motion satisfies the heat equation, $u_{t} = \frac{1}{2} \Delta u$. For Brownian motion with drift and stationary measure $\rho$, the density satisfies an analogous diffusion $u_{t} = \nabla \rho(u) + \frac{1}{2} \Delta u \equiv \Delta_{\rho} u$ containing a gradient term. In \cite[Lemma 2.2]{burdzy2008discrete}, $\Delta$ should be replaced by $\Delta_{\rho}$.  
\item \cite[Lemma 2.3]{burdzy2008discrete}: no changes required at all.
\end{enumerate}

Propagating the above changes through the proof of \cite[Thm 2.4]{burdzy2008discrete} gives the desired conclusion. We note that the proof of \cite[Thm 2.4]{burdzy2008discrete} is fairly long because \cite{burdzy2008discrete} is concerned with a much harder problem: proving convergence to reflected random walk in higher dimensions and with non-smooth boundaries. Proving convergence in one dimension, as we do here, is substantially simpler.

We give here also a sketch of an elementary proof that mimics the strategy of \cite{burdzy2008discrete}, for readers who do not wish to look at the full details of that paper.

We begin by setting notation. Recall that a sequence of random variables $V_{n}$ converges weakly to a random variable $V$ if $\mathbb{E}[f(V_{n})] \rightarrow f(V)$ for all bounded, continuous functions $f$. We say that this weak convergence \textit{occurs up to additive error $e_{n}$ with constant $C < \infty$} if 
\[ 
|\mathbb{E}[f(V_{n})] - f(V)| \leq C e_{n}
\] 
for all functions $f$ with $\|f \|_{\infty} = 1$. Recall also the definition of the \textit{trace} of a CADLAG stochastic process $X_{t}$ on a set $S$. Roughly speaking, the trace is obtained by throwing out all points $t$ for which $X_{t} \notin S$; see \cite{landim2019metastable} for a precise definition.

Continuing, note that we have explicit formulas for the generators of the stochastic processes of interest. For fixed $\gamma >0$ and all  $0 < \delta < \Delta(\gamma) >0$ sufficiently small, Donsker's theorem tells us that  $\{ X_{C_{X} c^{-2}t}^{(c)}\}_{t=0}^{\delta}, \, \{Y_{C_{Y}c^{-2}t}^{(c)}\}_{t=0}^{\delta}$ started at common point $X_{0} = Y_{0} = x \in [\gamma, 1-\gamma]$ both converge weakly to a Brownian motion with drift $\psi'(x)$, and furthermore that this convergence occurs up to additive error $\delta^{2}$ with some constant $C = C(\gamma)$ that is uniform in the choice of $x \in [\gamma, 1-\gamma]$ and $0 < \delta < \Delta(\gamma)$. 

For fixed $0 < \gamma <0.25$, let $\{\hat{X}_{t}^{(c)}\}$, $\{\hat{Y}_{t}^{(c)}\}$ be the \textit{traces} of $\{X_{t}^{(c)}\}$, $\{Y_{t}^{(c)}\}$ on $[\gamma,1-\gamma]$. Concatenating paths, the above calculation shows that  $\{ \hat{X}_{C_{X} c^{-2}t}^{(c)}\}_{t=0}^{B}, \, \{\hat{Y}_{C_{Y}c^{-2}t}^{(c)}\}_{t=0}^{B}$ converge to the \textit{trace} of Brownian motion with drift on $[\gamma,1-\gamma]$, with additive error $B \delta$ and the same constant $C = C(\gamma)$ as above. 

Next, it is possible to check that, for any fixed $t > 0$ and $Z \in \{X,Y\}$, $P[Z_{C_{Z} c^{-2}t}^{(c)} \in [0,\gamma) \cup (1-\gamma,1]]$ goes to 0 uniformly in the starting point of the process (though not uniformly in $t$).\footnote{The bound on the time that an excursion into $[0,\gamma)$ lasts can be proved \textit{e.g.} by coupling to a pair of random walks with constant drift just above or just below $\psi'(0)$ in such a way that our walk is always sandwiched between the constant-drift walks until the first exit time from $[0,\gamma]$. The excursion times for the constant-drift walks themselves can be directly bounded using \textit{e.g.} Hoeffding's inequality.} Thus, as $\gamma$ goes to 0, the trace processes themselves converge to some limiting process\footnote{This follows immediately from \textit{e.g.} Theorem 2.5 of \cite{ethier2009markov}}. Since the traces converge as $c$ goes to 0 for any fixed $\gamma > 0$, and the traces converge as $\gamma$ goes to 0 (and the occupation time of $[0,\gamma]\cup[1-\gamma,1]$ converges weakly to 0), the full processes must converge as $c$ goes to 0 as well.

Since the limits of the trace processes are themselves continuous, the generator of this limiting process must agree with the generator of Brownian motion with drift on every interval of the form $[\gamma,1-\gamma]$; by the same ``sandwich" argument described above, it is furthermore possible to check that the limiting process cannot have any jumps at $\{0\}$ or $\{1\}$. The only generator with both these properties is reflected Brownian motion with drift.

\end{proof}

\end{document}